\title[On the homotopy type of definable groups]{On the homotopy type of definable groups in an o-minimal structure}
\author[A.~Berarducci]{Alessandro Berarducci}
\address{Universit\`a di Pisa, Dipartimento di Matematica, Largo Bruno Pontecorvo 5, 56127 Pisa, Italy}
\email{berardu@dm.unipi.it}
\thanks{Partially supported by PRIN 2007PRYAAF\_004: O-minimalit\`a - Metodi e modelli non standard - Teoria degli insiemi}
\author[M.~Mamino]{Marcello Mamino}
\address{Classe di Scienze - Scuola Normale Superiore, Piazza dei Cavalieri, 7, 56126 Pisa, Italy}
\email{m.mamino@sns.it}
\date{19 Nov. Nov. 2009}
\subjclass[2000]{03C64, 03H05, 22E15}
\keywords{Homotopy, Definable groups, o-minimality}
\DeclareMathOperator{\R}{\mathbb R}
\DeclareMathOperator{\NN}{\mathbb N}
\DeclareMathOperator{\Z}{\mathbb Z}
\DeclareMathOperator{\dom}{dom} 
\DeclareMathOperator{\img}{Im}
\DeclareMathOperator{\Ad}{Ad}
\DeclareMathOperator{\Aut}{Aut}
\DeclareMathOperator{\Int}{Int}
\newcommand{\ov}{\overline}
\newcommand{\uc}{\widetilde}
\theoremstyle{plain}
\newtheorem{theorem}{Theorem}
\newtheorem{lemma}[theorem]{Lemma}
\newtheorem{proposition}[theorem]{Proposition}
\newtheorem{corollary}[theorem]{Corollary}
\newtheorem{claim}{Claim}
\newtheorem{fact}[theorem]{Fact}
\theoremstyle{definition}
\newtheorem{remark}[theorem]{Remark}
\newtheorem{definition}[theorem]{Definition}
\newtheorem{example}[theorem]{Example}
\newtheorem{exercise}[theorem]{Exercise}
\newtheorem{assumption}[theorem]{Assumption}
\numberwithin{theorem}{section}
\newcommand{\bt}{\begin{theorem}}
\newcommand{\et}{\end{theorem}}
\newcommand{\bl}{\begin{lemma}}
\newcommand{\el}{\end{lemma}}
\newcommand{\bfa}{\begin{fact}}
\newcommand{\efa}{\end{fact}}
\newcommand{\bexa}{\begin{example}}
\newcommand{\eexa}{\end{example}}
\newcommand{\bexe}{\begin{exercise}}
\newcommand{\eexe}{\end{exercise}}
\newcommand{\bprop}{\begin{proposition}}
\newcommand{\eprop}{\end{proposition}}
\newcommand{\bp}{\begin{proof}}
\newcommand{\ep}{\end{proof}}
\newcommand{\bc}{\begin{corollary}}
\newcommand{\ec}{\end{corollary}}
\newcommand{\bd}{\begin{definition}}
\newcommand{\ed}{\end{definition}}
\newcommand{\br}{\begin{remark}}
\newcommand{\er}{\end{remark}}
\newcommand{\w}[1]{\widetilde {#1}} 
\newcommand{\ca}[1]{{\mathcal #1}}
\newcommand{\lie}[1]{#1/{#1}^{00}}
\newcommand{\fund}[2]{\pi(#1,#2)}
\newcommand{\dfundg}[1]{\pi_1^{\rm{def}}(#1)}
\newcommand{\dfund}[2]{\pi^{\rm{def}}(#1,#2)}
\newcommand{\fundg}[1]{\pi_1(#1)}
\newcommand{\rest}[1]{\hspace {-0.2em} \upharpoonright_{#1}}
\newcommand{\pushout}[8]{
\begin{CD}
#1 @>#5>> #2\\
@VV#6V @VV#7V\\
#3 @>#8>> #4
\end{CD}
}
\newenvironment{acknowledgements}{{\bf Acknowledgements.}}{}
\def\MM#1{\raise0.7ex\hbox{\tiny\ding{170}}\marginpar{$\mathcal{M}^2$: {\footnotesize #1}}}
\begin{document}

\begin{abstract}
We consider definably compact groups in an o-minimal expansion of a real closed field. It is known that to each such group $G$ is associated a natural exact sequence $1 \to G^{00} \to G \to \lie{G}\to 1$ where $G^{00}$ is the ``infinitesimal subgroup'' of $G$ and $\lie G$ is a compact real Lie group. We show that given a connected open subset $U$ of $\lie G$ there is a canonical isomorphism between the fundamental group of $U$ and the o-minimal fundamental group of its preimage under the projection $\tau \colon G \to \lie G$. We apply this result to show that there is a natural exact sequence $1 \to G^{00} \to \uc{G} \to \uc{\lie G} \to 1$ where $\uc{G}$ is the (o-minimal) universal cover of $G$, and $\uc{\lie G}$ is the universal cover of the real Lie group $\lie G$. 
Finally we prove that the (Lie-)isomorphism type of $\lie G$ determines the definable homotopy type of $G$. In the semisimple case a stronger result holds: $\lie G$ determines $G$ up to definable isomorphism. 
Our results depend on the study of the o-minimal fundamental groupoid of $G$ and the homotopy properties of the projection $G \to \lie G$.  
\end{abstract} 

\maketitle

\section{Introduction} 
Definable groups in o-minimal expansions of a real closed field have been studied by several authors (see \cite{Ot:08} for a survey). The class of such groups includes all semialgebraic groups over a real closed field, which in turn includes all algebraic groups over an algebraically closed field of characteristic zero (with a fixed maximal real closed subfield). Starting
with \cite{Pi:88}, the main line of research on definable groups has been guided by the
analogy with real Lie groups. However there are also some striking differences: the
correspondence between Lie groups and Lie algebras works well in the
simple and semisimple case (see \cite{PePiSt:00}), but fails in the abelian case due to the possible absence of one-parameter subgroups (see \cite{St:94,PeSt:99}). To remedy this, there have been two lines of attack in the study of definable groups. One through the study of generic subsets, a kind of
substitute for the Haar measure (see \cite{Ke:87,BeOt:04,PePi:07,HrPePi:08,HrPi:07}). The other through the study of the o-minimal Euler
characteristic (\cite{St:94}) and of the definable homotopy invariants of a definable group (see \cite{BeOt:02,EdOt:04}). The two
lines of research are highly intertwined and advances in each side have been possible
through the advances on the other side. By taking a quotient by the ``infinitesimal
subgroup'' one can associate in a canonical way to every definably compact group $G$ a compact
real Lie group $\lie G$ (\cite{Pi:04,BeOtPePi:05}), giving rise to a well behaved functor $F\colon G \to \lie G$ from definably compact groups to compact real Lie groups. A combination of the above mentioned approaches has lead to the determination of the dimension of the associated Lie group (\cite{HrPePi:08}), to the proof of ``the compact domination conjecture'' in \cite{HrPi:07} (extended in \cite{HrPePi:08b} to the non-abelian case), and to various comparison theorems
between the homotopy invariants of a definable group and those of the associated
Lie group (\cite{Be:07,Be:08,BeMaOt:09}). 

An important tool in these investigations has been the study of the definable fundamental group $\dfundg G$. If $G$ is definably compact abelian and definably connected, $\dfundg G \cong \Z^n$ where $n= \dim G$ (\cite{EdOt:04}).
In general if $G$ is definably compact and definably connected, $\dfundg G \cong \fundg {\lie G}$ (\cite{BeMaOt:09}). We will establish a local version of this result, namely we show that for each open connected subset $U$ of $\lie G$ there is an isomorphism $\dfundg {\tau^{-1}(U)} \cong \fundg U$ where $\tau \colon G \to \lie G$ is the projection. To prove this, it is convenient to first establish a similar result for the ``fundamental groupoid'', namely we show that, for each open subset $U$ of $\lie G$, there is a natural homomorphisms of groupoids $\dfund {\tau^{-1}(U)}{\tau^{-1}(U)} \to \fund U U$ (Theorem \ref{top-lemma}). Note that there is no obvious way to define the required homomorphisms. 
The difficulty is the following. If $\gamma$ is a definable path in $G$ and $\tau\colon G\to \lie G$ is the natural map, then $\tau \circ \gamma$ is {\em not} a path in $\lie G$ because we are working in different categories: definable paths in $G$ are parametrized by intervals of the o-minimal structure, while paths in $\lie G$ are parametrized by intervals in $\R$. We will however show that $\tau \circ \gamma$ can be approximated, in a suitable sense, by a path in $\lie G$ (see Definition \ref{defi}). The process of approximation depends on the choice of suitable coverings of $G$ and $\lie G$, but the resulting homomorphisms are independent of the coverings and enjoy good functorial properties. Moreover (taking $U=\lie G$) the groupoid homomorphism $\dfund G G \to \dfund {\lie G}{\lie G}$ respects to the action of $G$. We use these facts to show that there is a natural exact sequence $1 \to G^{00} \to \uc{G} \to \uc{\lie G} \to 1$ where $\uc{G}$ is the (o-minimal) universal cover of $G$, and $\uc{\lie G}$ is the universal cover of the real Lie group $\lie G$. 

In the last part of the paper we apply the results on the fundamental groupoid to try to understand up to which extent $\lie G$ determines $G$, where $G$ is definably compact and definably connected. In \cite{HrPePi:08b} it is proved that in the group language $G$ is elementary equivalent to $\lie G$. In the same paper it is proved that the commutator subgroup $[G,G]$ is definable (and semisimple) and $G$ is the almost direct product of $[G,G]$ and the identity component $Z^0(G)$ of its center (an abelian definably connected group). 
The study of definably compact definably connected groups can thus be reduced to a large extent to the abelian case and the semisimple case. So let us first consider these two cases separately.

The study of semisimple definable groups can be essentially reduced to the study of groups defined in the real field $(\R,<,+,\cdot)$. This depends on the fact that any o-minimal expansion of a field contains an isomorphic copy of the field $\R^{\rm{alg}}$ of the real algebraic numbers, and any definably connected semisimple definable group is definably isomorphic to a semialgebraic group defined over $\R^{\rm{alg}}$ (\cite{EdJoPe:07} or \cite[Theorem 4.4]{HrPePi:08b}). Using this fact we show that any Lie isomorphism from $\lie G$ to $\lie {G'}$ induces, if the o-minimal structure is sufficiently saturated, a definable isomorphism from $G$ to $G'$ (see Theorem \ref{semisimple-case} for the full statement).

The abelian case is in this respect more complicated. Recall that any compact connected abelian real Lie group of dimension $n$ is Lie isomorphic to the $n$-dimensional torus. The corresponding result fails for definable groups due to the possible lack of definable one-dimensional subgroups.  However in \cite{BeMaOt:09} it is proved that any two definably compact definably connected abelian groups $G$ and $G'$ of the same dimension are definably homotopy equivalent. Using the work on the fundamental groupoid we strengthen this result as follows. Given a finite subgroup $\Gamma$ of $G$ there is a definable homotopy equivalence $f:G \to G'$ that restricted to $\Gamma$ is a group isomorphism onto its image $\Gamma'$ and moreover $f(cx) = f(c)f(x)$ for all $c$ is in $\Gamma$ and all $x$ in $G$ (for the full statement see Theorem \ref{abelian-case}). 

Combining the results on the abelian case and the semisimple case, we obtain that given two definably compact definably connected groups $G$ and $G'$ with $\lie G \cong \lie {G'}$, then $G$ and $G'$ are definably homotopy equivalent \footnote{This also follows by the results of Elias Baro in \cite[\S4]{Ba:09} obtained independently at the same time and by different methods (private communication).}. Moreover, given a finite subgroup $\Gamma$ of $G$, we can choose the homotopy equivalence to be ``$\Gamma$-equivariant'', namely to respect the action of $\Gamma$ (for the full statement see Theorem \ref{full}). In general $\Gamma$-equivariant maps are a useful tool to deal with almost direct products (see Lemma \ref{glue-lemma}). 

Having proved that the homotopy type of a definably compact group $G$ is determined by $\lie G$, let us observe that if $G$ is not definably compact, the study of its homotopy type can be reduced to the compact case by the results contained in \cite{Co:09}. 

Our proofs make essential use of the fact that $G^{00}$ is a decreasing intersection of a countable family of definably simply connected (actually even definably contractible) subsets of $G$. This was established in \cite{Be:08} making use of the ``compact domination conjecture'' proved in \cite{HrPePi:08b} (extended in \cite{HrPePi:08b} to the non-abelian case). To a large extent our analysis remains valid replacing $\tau \colon G \to \lie G$ with any other ``logical quotient'' $f\colon X\to Y$ (as in \cite[\S 2]{Pi:04}) where $X$ is a definable set and $Y$ is a locally simply connected compact space, provided that each fiber $f^{-1}(y)$ of $f$ is a a decreasing intersection $\bigcap_{n\in \NN} C_i$ of a countable family of definably simply connected sets. Under these assumptions Theorem \ref{top-lemma} gives us an isomorphism $\dfundg X \cong \fundg Y$. In the group situation, namely when $f$ is the map $\tau \colon G \to \lie G$, we obtain stronger results (e.g. Theorem \ref{full}). Essentially this depends on the fact (established in \cite{BeMaOt:09}) that all the higher homotopy groups of a definably compact abelian group are zero, so the fundamental group already gives us all the relevant homotopic information.

\begin{acknowledgements} We thanks Elias Baro for his comments on a preliminary draft of this paper.  The main results of this paper were presented at the Complexo Interdisciplinar of the University of Lisbon in the period 10-16 March 2009. The first author thanks Tamara Servi, Mario Edmundo, Fernando Ferreira, and all the participants to the seminars for their invitation and warm welcome.
\end{acknowledgements} 

\section{Topological notions}\label{def-spaces}
Fix an o-minimal structure $M$ expanding a field. By ``definable'' we mean ``definable in $M$''. As usual we put on $M$ the order topology and on $M^n$ the product topology. Unless otherwise stated subsets of $M^n$ will be endowed with the induced topology. 
Let $X$ be a subset, not necessarily definable, of a definable set. We say that $X$ is {\bf definably connected} if it cannot be partitioned in two non-empty open subsets which are relatively definable in $X$, where by definition $A\subset X$ is relatively definable if it is the intersection of a definable set with
$X$. We say that $X$ is {\bf definably path-connected} if each pair of points
of $X$ can be connected by a definable (continuous) path in $X$. Finally $X$ is {\bf definably simply connected} if it is definably path connected and any two definable paths in $X$ with the same endpoints are definably homotopic relative to the endpoints. 

Dropping the definability conditions one obtains the corresponding classical notions. The reader should be however be warned that definably connected spaces are in general not connected in the classical sense (unless the o-minimal structure is based on the reals), and similarly for the other notions. 
Let us also recall that a topological space $X$ is {\bf locally simply connected} if for each open set $O$ in $X$ and each $x\in O$, there is an open neighbourhood $V\subset O$ of $x$ which is simply connected. There are several possible definable versions of this notion, but we only need the classical concept. 

By \cite{Pi:88} any definable group $G$ in $M$ has a natural group topology making it into a (regular) definable manifold, namely $G$ admits a finite atlas where each chart is definably isomorphic to an open subsets of $M^d$, with $d= \dim(G)$. When speaking about a definable group $G$ we will always assume that $G$ has the group topology rather than the topology induced by the ambient space $M^n$. So in particular a definable path in $G$ will always be assumed to be continuous with respect to the group topology of $G$. Note that since $M$ expands a field, by \cite[\S 10, Thm. 1.8]{vdD:98} we can always reduce to a situation in which the two topologies coincide, namely we can definably embed $G$ in $M^n$ as a subspace. In this case $G$ is {\bf definably compact} in the sense of \cite{PeSt:99} if and only if $G$ is closed and bounded in $M^n$. 

\section{Type-definability} 

We recall that a type-definable set is a set that can be presented as an infinite intersection $\bigwedge_{i\in I}X_i$ where each $X_i$ is definable and $I$ is a possibly infinite index set. Type-definable sets come equipped with a presentation, so it makes sense to interpret them in elementary extensions. Unlike what happens for definable sets, an equality $\bigwedge_{i\in I} X_i = \bigwedge_{j\in J} Y_j$ can hold in some model $M$ and fail in an elementary extension. To have a notion of equality not dependent on the model, we must restrict ourselves to models that are sufficiently saturated. For this reason it is convenient to identify a type-definable set with the set it defines in some big saturated ``monster model'' and insist that the index sets in the infinite conjunctions should be ``small'' (with respect to the saturation of the monster model). With these conventions one has for instance that infinite conjunctions commute with the existential quantifier, namely $\exists x \bigwedge_{i\in I} (x\in X_i) \equiv \bigwedge_{i\in I} \exists x (x\in X_i)$, provided the family of definable sets $\{X_i\}_{i\in I}$ is downward directed (e.g. it is closed under finite intersections). It is common practice to say that such equalities hold ``by saturation''. 
A $\bigvee$-definable set is a set presented as a union $\bigvee_{i\in I}X_i$, where each $X_i$ definable and the index set $I$ is small.  By saturation if a type-definable set is included in a $\bigvee$-definable set, there is some definable set between them. 

Starting with \cite{Pi:04}, type definability plays an important role in the study of definable groups in an o-minimal structure. Given a definable group $G$ and a type-definable subgroup $H<G$, one says that $H$ has {\bf bounded index} if the index $[G:H]$ is smaller than the amount of saturation of the monster model. Equivalently $[G:H]<2^{|T|+|A|}$ where $|T|$ is the cardinality of the language and $A$ is the set of parameters over which $G$ and $H$ are defined (\cite{Sh:08,HrPi:07}). If $H$ has bounded index, then $G/H$ does not depend on the model, in the sense that if $M$ is sufficiently saturated and $M' \succ M$, then the natural map $G(M)\to G(M')/H(M')$ has kernel $H(M)$ and therefore $G(M)/H (M) \cong G(M')/H (M')$ (\cite{Pi:04}). Each definably compact group $G$ in an o-minimal expansion of a field, has a smallest type-definable subgroup $G^{00} < G$ of bounded index, necessarily normal, and $G/G^{00}$ with the ``logic topology'', is a compact real Lie group (\cite{BeOtPePi:05}). Recall that a set $X \subset G/G^{00}$ is closed in the {\bf logic topology} if and only if its preimage in $G$ under the natural map $\tau^G \colon G \to \lie G$ is type-definable. It then follows that the preimage of an open subset of $\lie G$ is $\bigvee$-definable and that the image of a type-definable subset of $G$ is a closed subset of $\lie G$.  

\section{Topological consequences of compact domination}\label{top-cons}

One of the aims of this paper is to explore the topological consequences of the compact domination conjecture proved in \cite{HrPi:07} (and extended in \cite{HrPePi:08b} to the non-abelian case). One of its equivalent formulations says that given a definably compact group $G$, the image in $\lie G$ of a nowhere dense definable subset of $G$ has Haar measure zero. We will only use the following consequence of compact domination, proved in
\cite{Be:08}: the subgroup $G^{00}$ is a decreasing intersection $\bigcap_{i\in \NN} C_i$ of definably simply connected (actually even definably contractible) definable subsets $C_i$ of $G$. We will explore this situation in the following more general setting: 

\begin{assumption} \label{assumptions} 
Let $X$ be a subset of a definable set, let $Y$ be a locally simply connected second countable Tychonoff topological space, and let $f \colon X \to Y$ be a surjective function with the following properties:
\begin{enumerate}
\item The preimage of every closed subset of $Y$ is type-definable. 
\item For all $y\in Y$ the type-definable set $f^{-1}(y)$ is a decreasing intersection $\bigcap_{i\in \NN} C_i$ of definably simply connected definable subsets $C_i$ of $X$. 
\end{enumerate} 
\end{assumption}

Note that the second countability assumption ensures that $|Y|\leq 2^{\aleph_0}$, so in particular $Y$ is ``small'' (with respect to the saturation of the monster model). 

\bexa \label{mainex} The natural map $\tau \colon G \to \lie G$ satisfies Assumption \ref{assumptions} (where $G$ is a definably compact definable group). More generally let $O$ be an open subset of $\lie G$, and define $f\colon \tau^{-1}(O) \to O$ by  $f(x) = \tau(x)$.  Then $f$ satisfies Assumption \ref{assumptions}. 
\eexa
\bp $\lie G$ is a compact Lie group, so in particular it is Tychonoff, second countable and locally simply connected, and all these properties are inherited by its open subsets. For $y\in O$, $\tau^{-1}(y)$ is a coset of $G^{00}$, so it is a decreasing intersection $\bigcap_{i\in \NN} C_i$ of definably simply connected definable subsets $C_i$ of $G$. Since $O$ is open, $\tau^{-1}(O)$ is $\bigvee$-definable and therefore the sets $C_i$ are eventually contained in $\tau^{-1}(O)$. 
\ep

\bd (Definable fundamental groupoid) \label{fund}
Given a subset $X$ of a definable set, and a subset $\Gamma$ of $X$, let $P^{\rm{def}}(X,\Gamma)$ be the set of definable paths in $X$ with endpoints in $\Gamma$. Let $\dfund X \Gamma$ be the quotient of $P^{\rm{def}}(X,\Gamma)$ modulo definable homotopy of paths (relative endpoints). We define an operation $+$ on $\dfund X \Gamma$ by $[\alpha]+[\beta] = [\alpha + \beta]$ where $\alpha + \beta$ is the concatenation of the paths $\alpha$ and $\beta$ (with reparametrization).
This is  defined only when the final point of $\alpha$ coincides with the starting point of $\beta$. With this operation $\dfund X \Gamma$ is a groupoid, namely a category in which every morphism is an isomorphism. 
In particular when $\Gamma = X$ we obtain the {\bf definable fundamental groupoid} $\dfund X X$ of $X$. 
When $\Gamma$ is a singleton we obtain the definable fundamental group $\dfund X {x_0} := \dfund X {\{x_0\}}$, which will also be written as $\dfundg X$ when the base point is clear from the context or irrelevant. Dropping ``def'' one obtains the corresponding classical notions. 
\ed 

In this section we will prove the following theorem.

\bt \label{top-lemma} Let $f\colon X \to Y$ be as in Assumption \ref{assumptions}. Then there is a unique morphism of groupoids $f_* \colon \dfund X X \to \fund {Y} {Y}$ with the following properties: 
\begin{enumerate}
\item $f_* = f$ on the object part of the groupoids, namely $f_*$ maps the definable homotopy class of a path with endpoints $x,y$, to the homotopy class of a path with endpoints $f(x), f(y)$ respectively. 
\item\label{locality} For any open $O\subseteq Y$, and for any
$[a]\in\dfund{X}{X}$ such that $\img{a}\subseteq f^{-1}(O)$, there is a
path $b$ in $Y$ such that $\img{b}\subseteq O$ and $f_*([a])=[b]$.
\end{enumerate}
Moreover we have:
\begin{enumerate}
\item[(3)] If $\Gamma$ is a subset of $X$ such that $f \rest \Gamma$ is injective, then the restriction of $f_*$ to $\dfund X \Gamma$ is an isomorphism onto $\fund {Y} {f(\Gamma)}$.
\end{enumerate}
\et 

\br \label{cor}
In particular when $\Gamma$ is a singleton and $f$ is the projection $\tau \colon G \to \lie G$ we obtain an isomorphism $\tau_* \colon \dfundg G \stackrel{\cong}{\longrightarrow} \fundg {\lie G}$. 
Note that if $f\colon X \to Y$ satisfies Assumption \ref{assumptions} and $O$ is an open subset of $Y$, then the restriction $f^O \colon f^{-1}(O) \to O$ of $f$ continues to satisfy the assumption (we have restricted also the codomain to make the map surjective). So we have a morphism of groupoids $$f^O_* \colon \dfund {f^{-1}(O)}{f^{-1}(O)} \to \fund O O.$$ By (3), when $O$ is connected and $\Gamma$ is a singleton, we obtain an isomorphism $\dfundg {f^{-1}(O)} \to \fundg O$, hence the preimage $f^{-1}(O)$ of a simply connected open subset $O$ of $Y$ is definably simply connected. Moreover from the proof of Theorem \ref{top-lemma} it follows that we have a commutative diagram
\[
\pushout{\dfund {f^{-1}(O)}{f^{-1}(O)}} {\fund O O}{\dfund X X}{\fund Y Y}{f^O_*}{}{}{f_*}
\]
where the vertical arrows are induced by the inclusion maps.
\er

\begin{proof}[Proof of Theorem \ref{top-lemma}] The proof is splitted into a series of claims and definitions and will be completed at the end of this section. 

\begin{claim} \label{compact} Let $D$ be a definable subset of $X$. Then $f(D)$ is a compact subset of $Y$. The same conclusion holds if $D$ is only assumed to be type-definable. \end{claim}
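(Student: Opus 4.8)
The plan is to derive compactness of $f(D)$ from the compactness theorem of first-order logic (equivalently, from saturation of the monster model), exploiting the fact that by Assumption~\ref{assumptions}(1) the map $f$ pulls closed subsets of $Y$ back to type-definable subsets of $X$. Since $Y$ is second countable it is Lindel\"of, so it suffices to show that every \emph{countable} open cover $\{O_k\}_{k\in\NN}$ of $f(D)$ admits a finite subcover; all index sets appearing in the argument will then be countable, hence ``small'' with respect to the saturation of the monster model. Note that neither continuity of $f$ nor Assumption~\ref{assumptions}(2) will be needed here.

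Fix such a cover. For each $k$ the complement $Y\setminus O_k$ is closed, so by Assumption~\ref{assumptions}(1) the preimage $f^{-1}(Y\setminus O_k)$ is type-definable; write $f^{-1}(Y\setminus O_k)=\bigcap_{i\in I_k}Z_{k,i}$ with each $Z_{k,i}$ definable and $I_k$ countable. Then $f^{-1}(O_k)=X\setminus\bigcap_{i\in I_k}Z_{k,i}=\bigcup_{i\in I_k}(X\setminus Z_{k,i})$; in particular $X\setminus Z_{k,i}\subseteq f^{-1}(O_k)$ for every $i\in I_k$. (In the principal case $f=\tau\colon G\to\lie G$ one has $X=G$ definable, so $f^{-1}(O_k)$ is genuinely $\bigvee$-definable; in the general setting of Assumption~\ref{assumptions} it is only a small union of relatively definable subsets of $X$, but that is all we use.) Since $\{O_k\}_k$ covers $f(D)$ and $D\subseteq X$, each $x\in D$ lies outside $Z_{k,i}$ for some pair $(k,i)$; equivalently, $D\cap\bigcap_{k\in\NN}\bigcap_{i\in I_k}Z_{k,i}=\emptyset$.

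Now invoke compactness. If $D$ is definable, the partial type $\{x\in D\}\cup\{x\in Z_{k,i}:k\in\NN,\ i\in I_k\}$ is over a small parameter set and is not realized in the monster model, hence is inconsistent, hence has an inconsistent finite subtype; if $D=\bigcap_m D_m$ is only type-definable, apply the same reasoning to $\{x\in D_m:m\in\NN\}\cup\{x\in Z_{k,i}:k,i\}$. Either way one obtains finitely many pairs $(k_1,i_1),\dots,(k_n,i_n)$ with $D\cap Z_{k_1,i_1}\cap\cdots\cap Z_{k_n,i_n}=\emptyset$, that is, $D\subseteq\bigcup_{l=1}^{n}(X\setminus Z_{k_l,i_l})\subseteq\bigcup_{l=1}^{n}f^{-1}(O_{k_l})$. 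Applying $f$ yields $f(D)\subseteq O_{k_1}\cup\cdots\cup O_{k_n}$, a finite subcover. Hence $f(D)$ is compact; being a compact subset of the Hausdorff space $Y$, it is moreover closed in $Y$.

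The only genuinely delicate point is the bookkeeping of ``smallness'': second countability of $Y$ is what permits the reduction to a countable cover, and Assumption~\ref{assumptions}(1), applied to the countably many closed sets $Y\setminus O_k$, is what exhibits the preimages $f^{-1}(O_k)$ as small unions of (relatively) definable sets, so that the compactness theorem may legitimately be invoked over a small parameter set. The mild extra care required in the general Assumption~\ref{assumptions} situation, as opposed to the group case $f=\tau$, is that $X$ need not be definable, so the pieces $X\setminus Z_{k,i}$ are merely relatively definable in $X$; this costs nothing, since we only ever work with the ambient definable sets $Z_{k,i}$ together with the inclusion $D\subseteq X$.
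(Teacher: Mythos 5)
Your argument is correct, and it is the open-cover dual of the paper's proof rather than a different idea: both rest on exactly the same two ingredients, namely Assumption \ref{assumptions}(1) (preimages of closed sets are type-definable) and saturation of the monster model. The paper argues directly with the finite intersection property: a family of closed subsets of $f(D)$ with the FIP pulls back to a small family of type-definable subsets of $D$ with the FIP, which by saturation has a common point, whose image lies in all the given closed sets. Your route first uses second countability of $Y$ (Lindel\"of) to reduce an arbitrary open cover to a countable one, then runs the compactness theorem on the definable sets $Z_{k,i}$ presenting the preimages of the complements, extracting a finite subcover. What your version buys is that the smallness bookkeeping is completely explicit; in the paper's version the corresponding point is that the family of distinct closed subsets of $Y$ is itself small, which again comes from second countability (at most $2^{\aleph_0}$ closed sets), so neither proof is really more general than the other, and the paper's is shorter. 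Two minor presentational points: a type-definable set is by the paper's convention a \emph{small} (not necessarily countable) intersection of definable sets, so you should write $I_k$ small rather than countable --- the argument is unaffected, since a countable union of small index sets is still small; and ``definable subset $D$ of $X$'' should be read, as you in effect do, as a definable set contained in $X$, so that ``$x\in D$'' (or ``$x\in D_m$'' in the type-definable case) is a legitimate condition in the partial type.
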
 

\begin{proof}[Proof of Claim.] 
The argument is well known: 
given a family $(Z_i \mid i\in I)$ of closed subsets of $f(D)$ with the finite intersection property, the preimages $f^{-1}(Z_i)$ form a small family of type-definable subsets of $D$ with the finite intersection property. By saturation they have a non-empty intersection in $D$, so the $Z_i$'s have a non-empty intersection in $f(D)$. \end{proof} 

\begin{claim}  \label{connected}
\begin{enumerate} 
\item Let $Z$ be a closed connected subset of $Y$. Then the type-definable set $f^{-1}(Z)$ is definably connected.  
\item Let $U$ be an open connected subset of $Y$. Then the
$\bigvee$-definable set $f^{-1}(U)$ is definably path-connected.
\end{enumerate} 
\end{claim}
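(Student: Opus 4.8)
The plan is to prove part (1) first and to deduce part (2) from it. For part (1), suppose for contradiction that $f^{-1}(Z) = A \sqcup B$ with $A$ and $B$ nonempty, relatively definable and relatively open in $f^{-1}(Z)$. Write $A = a \cap f^{-1}(Z)$ and $B = b \cap f^{-1}(Z)$ with $a, b$ definable; disjointness gives $a \cap b \cap f^{-1}(Z) = \emptyset$. Since $Z$ is closed, $f^{-1}(Z)$ is type-definable by Assumption \ref{assumptions}(1), hence so are $A$ and $B$; by Claim \ref{compact} the images $f(A)$ and $f(B)$ are compact, hence closed in the Hausdorff space $Y$, and (as $f$ is surjective) $f(A) \cup f(B) = Z$ with both nonempty. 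As $Z$ is connected, $f(A)$ and $f(B)$ must meet; pick $y \in f(A) \cap f(B)$. Then $f^{-1}(y) \subseteq f^{-1}(Z)$ meets both $a$ and $b$, so $f^{-1}(y) = (f^{-1}(y) \cap a) \sqcup (f^{-1}(y) \cap b)$ is a partition of $f^{-1}(y)$ into two nonempty subsets which are relatively definable and relatively open in $f^{-1}(y)$ (relative openness because $A$ and $B$ are open in $f^{-1}(Z)$). Thus $f^{-1}(y)$ is not definably connected, and the argument will be complete once we prove that every fibre is definably connected.

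By Assumption \ref{assumptions}(2), $f^{-1}(y) = \bigcap_{i \in \NN} C_i$ is a decreasing intersection of definably simply connected, in particular definably connected, definable sets, so it suffices to establish the following: a decreasing intersection of definably connected definable subsets of a definable set is definably connected. I would prove this by saturation. If $\bigcap_i C_i = P \sqcup Q$ with $P = p \cap \bigcap_i C_i$ and $Q = q \cap \bigcap_i C_i$ nonempty, relatively definable and relatively open, then every $x \in P$ has a definable open neighbourhood $W$ with $W \cap \bigcap_i C_i \subseteq p$; by saturation $W \cap C_i \subseteq p$ for some $i$, so $x$ belongs to the relative interior $E_i$ of $p \cap C_i$ in $C_i$ (a definable set, open in $C_i$). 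Hence $P \subseteq \bigcup_i E_i$ and, symmetrically, $Q \subseteq \bigcup_i F_i$ with $F_i$ the relative interior of $q \cap C_i$ in $C_i$; since $E_i \subseteq p$ and $F_i \subseteq q$ one gets $P = \bigcap_j C_j \cap \bigcup_i E_i$ and $Q = \bigcap_j C_j \cap \bigcup_i F_i$, so $\bigcap_j C_j$ is contained in the $\bigvee$-definable set $\bigcup_i (E_i \cup F_i)$ and disjoint from the $\bigvee$-definable set $(\bigcup_i E_i) \cap (\bigcup_i F_i)$. A further saturation argument (using that a type-definable set contained in a $\bigvee$-definable set has a definable set between them) then yields a single index $N$ such that $C_N$ is covered by two disjoint, nonempty, relatively definable, relatively open subsets, contradicting the definable connectedness of $C_N$. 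This last step — manufacturing genuinely relatively open traces on one $C_N$ out of the relatively open traces on the full intersection — is where I expect the only real care is needed; everything else is formal.

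For part (2), let $x_0, x_1 \in f^{-1}(U)$ and put $y_i = f(x_i) \in U$. Since $Y$ is locally simply connected it is locally path-connected, so the connected open set $U$ is path-connected; choose a classical path $\sigma \colon [0,1] \to U$ from $y_0$ to $y_1$. Its image $\sigma([0,1])$ is compact and connected, so $f^{-1}(\sigma([0,1]))$ is type-definable by Assumption \ref{assumptions}(1) and definably connected by part (1); moreover it is contained in the $\bigvee$-definable set $f^{-1}(U)$, hence there is a definable set $D$ with $f^{-1}(\sigma([0,1])) \subseteq D \subseteq f^{-1}(U)$. Now $x_0$ and $x_1$ both lie in $f^{-1}(\sigma([0,1]))$, which, being definably connected, is contained in a single definably connected component of $D$; since definably connected components of definable sets are definably path-connected, $x_0$ and $x_1$ are joined by a definable path inside $D \subseteq f^{-1}(U)$, as required.
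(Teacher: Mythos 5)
Your proposal is correct and follows essentially the same route as the paper: part (1) is the same argument via compactness of images of (type-)definable sets, reducing everything to the definable connectedness of the fibres, and part (2) is the same path-image-plus-saturation argument followed by passing to a definably connected component of an intermediate definable set $D$. The only difference is that the paper simply cites \cite[Lemma 2.2]{BeOtPePi:05} for the fact that a filtered intersection of definably connected definable sets is definably connected, whereas you sketch a saturation proof of it, which does work modulo the small point that one may have to descend to a deeper $C_M$ (with $M\geq N$) to make the two relatively open traces disjoint, since $p$ and $q$ need not be disjoint outside the intersection.
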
 
\begin{proof}[Proof of Claim.] (1) By \cite[Lemma 2.2]{BeOtPePi:05} if a type-definable set is the intersection of filtered family of definably connected sets, then it is itself definably connected. It then follows from Assumption \ref{assumptions} that for each $y\in Y$, the type-definable set $f^{-1}(y)$ is definably connected. Now let $Z$ be a closed connected subset of $Y$, and suppose for a contradiction that $f^{-1}(Z)$ is the union of two relatively definable disjoint non-empty open sets $A$ and $B$. Being relatively definable in a type-definable set, $A$ and $B$ are in fact type-definable. So their images $f(A)$ and $f(B)$ are closed (actually compact, by Claim \ref{compact}). Since $Z = f(A) \cup f(B)$ and $Z$ is connected, $f(A)$ and $f(B)$ have a non-empty intersection. Take $y\in f(A) \cap f(B)$. 
Then $f^{-1}(y)$ meets both $A$ and $B$, contradicting the fact that $f^{-1}(y)$ is definably connected.  \\
(2) Let $x,y\in f^{-1}(U)$. Choose a path $a$ in $U$ connecting $f(x)$ to $f(y)$. Then $Z:= \img (a)$ is a closed connected subset of $U$, so the type-definable set $f^{-1}(Z)$ is definably connected. Since this set is contained in the $\bigvee$-definable set $f^{-1}(U)$, by saturation there is a definable set $D$ with $f^{-1}(Z)\subset D \subset f^{-1}(U)$. The definably connected component $D'$ of $D$ containing $x$ must contain also $y$. Now it suffices to recall that a definable set is definably connected if and only if it is definably path connected. 
\end{proof}

\bd \label{star} 
An open cover $\ca P$ of a topological space is a {\bf star refinement} of a cover $\ca Q$, if for every $P\in \ca P$, there is a $Q \in \ca Q$ such that if $P'\in \ca P$ has a non-empty intersection with $P$ then $P' \subset Q$. In a metric space, and more generally in a  uniform space, every open cover has a star refinement. Every Tychnoff space admits a compatible uniform structure, so the existence of star refinements applies to Tychonoff spaces. In particular it applies to any subset of a compact Hausdorff space (since the Tychonoff condition is preserved in subspaces). 
\ed

\begin{claim} \label{covers}
There is an open cover $\ca U$ of $Y$ with the following properties. 
\begin{enumerate}
\item Each element of $\ca U$ is path connected, and whenever two elements of $\ca U$ have a non-empty intersection, their union is contained in some simply connected subset of $Y$. 
\item Let $\ca V := \{f^{-1}(U) \mid U\in \ca U\}$.  Each element of $\ca V$ is definably path connected (and $\bigvee$-definable), and whenever two elements of $\ca V$ have a non-empty intersection, their union is contained in some definably simply connected subset of $X$. 
\end{enumerate} 
\end{claim}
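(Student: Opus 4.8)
The plan is to realise $\ca U$ as the family of path components of the members of a star refinement of a carefully chosen open cover $\ca W$ of $Y$ by simply connected open sets. Almost all of the work is soft point-set topology; the one place where Assumption \ref{assumptions} (hence compact domination) is genuinely used is in arranging $\ca W$ so that the preimage of each of its members lies \emph{inside} a definably simply connected definable subset of $X$. I would emphasise at the outset that this is weaker than, and logically prior to, the statement of Remark \ref{cor} that $f^{-1}$ of a simply connected open set is itself definably simply connected (the latter will only emerge from Theorem \ref{top-lemma}), and that it suffices here precisely because a ``definably simply connected subset of $X$'' is allowed to be non-definable.

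First I would record two preliminary facts. Since $Y$ is closed in itself, Assumption \ref{assumptions}(1) gives that $X=f^{-1}(Y)$ is type-definable; consequently, for any definable $C\subseteq X$ the set $X\setminus C$ is type-definable, and so by Claim \ref{compact} the set $f(X\setminus C)$ is compact, in particular closed in the Hausdorff space $Y$. Then, for a fixed $y\in Y$, I would use Assumption \ref{assumptions}(2) to pick a definably simply connected definable $C_y\subseteq X$ containing $f^{-1}(y)$. Since $f^{-1}(y)\cap(X\setminus C_y)=\emptyset$ we get $y\notin f(X\setminus C_y)$, so $N_y:=Y\setminus f(X\setminus C_y)$ is an open neighbourhood of $y$; and from $f^{-1}(f(S))\supseteq S$ we obtain $f^{-1}(N_y)\subseteq X\setminus(X\setminus C_y)=C_y$. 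Using local simple connectedness I would choose a simply connected open $W_y$ with $y\in W_y\subseteq N_y$ and set $\ca W:=\{W_y\mid y\in Y\}$: an open cover of $Y$ by simply connected sets with $f^{-1}(W_y)\subseteq C_y$ for every $y$.

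Next I would invoke Definition \ref{star}: $Y$ is Tychonoff, so $\ca W$ has a star refinement $\ca U_0$; and since $Y$ is locally simply connected it is locally path connected, so the path components of the members of $\ca U_0$ are open. Let $\ca U$ be the family of all such path components. It is then routine that $\ca U$ is an open cover of $Y$ by path connected sets, and that it keeps the part of the star property we need: if $U,U'\in\ca U$ meet, choose $A,A'\in\ca U_0$ with $U\subseteq A$ and $U'\subseteq A'$; then $A\cap A'\ne\emptyset$, so the star condition applied to $A$ puts both $A$ and $A'$ inside a single member $W_y$ of $\ca W$, whence $U\cup U'\subseteq W_y$. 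This gives property (1), since $W_y$ is simply connected. For property (2): each $U\in\ca U$ is open and connected, so $f^{-1}(U)$ is definably path connected by Claim \ref{connected}(2); it is $\bigvee$-definable because $U\subseteq W_y$ forces $f^{-1}(U)\subseteq C_y$, so $f^{-1}(U)=C_y\setminus f^{-1}(Y\setminus U)$ is a definable set minus a type-definable one, i.e.\ a small union of definable sets; and if $f^{-1}(U)\cap f^{-1}(U')=f^{-1}(U\cap U')\ne\emptyset$ then $U\cap U'\ne\emptyset$, so as above $U\cup U'\subseteq W_y$ for some $y$, and therefore $f^{-1}(U)\cup f^{-1}(U')=f^{-1}(U\cup U')\subseteq f^{-1}(W_y)\subseteq C_y$, a definably simply connected definable subset of $X$.

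The main obstacle, and the only step that is not purely formal, is the construction of $N_y$ in the second paragraph: it rests on $f(X\setminus C_y)$ being \emph{closed}, which is exactly where compact domination enters, through the saturation/compactness argument of Claim \ref{compact}. Everything downstream --- existence of a star refinement, passing to path components, and the bookkeeping for (1) and (2) --- is standard.
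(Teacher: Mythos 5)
Your proof is correct, and its overall skeleton (cover by simply connected open sets whose preimages sit inside the sets $C_y$, star refinement, pass to path components, then routine verification of (1) and (2)) matches the paper's. The one step where you genuinely diverge is the construction of the neighbourhoods with $f^{-1}(W_y)\subseteq C_y$: the paper uses second countability, local simple connectedness and normality to produce a decreasing sequence of simply connected neighbourhoods $O_n$ of $y$ with $\ov{O_{n+1}}\subset O_n$ and $\bigcap_n O_n=\{y\}$, and then applies saturation to the chain of type-definable sets $f^{-1}(\ov{O_n})$ to find one contained in $C_y$; you instead observe that $X\setminus C_y$ is type-definable, invoke Claim \ref{compact} to see that $f(X\setminus C_y)$ is compact, hence closed, and take $N_y=Y\setminus f(X\setminus C_y)$ as the desired neighbourhood. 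Both mechanisms rest on saturation (yours through Claim \ref{compact}), but your version is slightly more economical: it does not use second countability or normality at this point, and it packages the saturation argument into a claim already established. A further small bonus of your write-up is the explicit verification that each $f^{-1}(U)$ is $\bigvee$-definable (as $C_y\setminus f^{-1}(Y\setminus U)$, a definable set minus a type-definable one), a point the paper only asserts parenthetically; note that this containment in a definable set is genuinely needed, since in the abstract setting of Assumption \ref{assumptions} the preimage of an open set is a priori only a small union of type-definable sets. The remaining bookkeeping (openness of path components via local path connectedness, preservation of the star property after passing to components, definable path connectedness via Claim \ref{connected}(2)) is handled exactly as in the paper.
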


Note that it is easy to find a cover satisfying (1) (take a star-refinement of a cover by simply connected sets).  Moreover it will turn out that (2) is actually implied by (1). However for the moment we cannot assume this fact. 

\begin{proof}[Proof of Claim \ref{covers}] 
By Assumption \ref{assumptions} we can choose, for each $y\in Y$, a definably simply connected definable set $C_y \subset X$ containing $f^{-1}(y)$. Since $Y$ is locally simply connected and second countable, we can find a decreasing sequence $\{O_n \mid n\in \NN\}$ of simply connected open neighborhoods of $y$ with $\bigcap_n O_n = \{y\}$. Moreover since $Y$ is normal (being Tychonoff), we can arrange so that $\ov {O_{n+1}} \subset O_n$.  By saturation there is some $n$ such that $f^{-1}(\ov{O_n}) \subset C_y$ (using the fact that the preimage of a closed set is type-definable). 
Fix such an $n$ and let ${U'}_y := O_n$. Let $\ca U$ be a star-refinament of the cover ${\ca U}' := \{{U'}_y \mid y \in Y\}$. We can assume that each element of $\ca U$ is path-connected, as otherwise we could replace it by its connected components. So $\ca U$ satisfies (1). Now let ${\ca V}':=\{f^{-1}(U) \mid U' \in {\ca U}'\}$ and $\ca V := \{f^{-1}(U) \mid U\in \ca U\}$. Then $\ca V$ is a star-refinement of $\ca V'$. By the Claim \ref{connected} each member of $\ca V$ is definably path-connected. By construction whenever two members of $\ca V$ intersect, their union is contained in a set of the form $C_y$, which is a definably simply connected set. 
\end{proof} 

\begin{claim} Any $\bigvee$-definable open subset $V$ of $X$ is a small union of definable open sets. \end{claim}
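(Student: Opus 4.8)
The plan is to start from a presentation $V=\bigcup_{i\in I}X_i$ with $I$ small and each $X_i$ definable, to replace $\{X_i\}_{i\in I}$ by the family of all its finite unions (still a small family, now directed under inclusion), and then to take as candidate covering sets the relative interiors $W_i:=\Int_X(X_i)$. We may assume, by \S\ref{def-spaces} (embedding the definable set or manifold in question as a subspace of some $M^n$), that $X$ is a definable subset of $M^n$ with the induced topology; then each $W_i$ is a definable open subset of $X$, and $\bigcup_i W_i\subseteq\bigcup_i X_i=V$, so the content of the claim is the reverse inclusion $V\subseteq\bigcup_i W_i$.

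For that, I would fix $p\in V$ and use that $V$ is open in $X$ to produce a basic open neighbourhood of $p$ contained in $V$, of the form $B\cap X$ with $B\subseteq M^n$ an open box; this set is \emph{definable} and contained in the small directed union $\bigcup_i X_i$. The key step is then to invoke saturation of the monster model: the partial type expressing that a point lies in $B\cap X$ but in none of the $X_i$ is inconsistent, hence finitely inconsistent, and directedness collapses the resulting finite subfamily to a single set, so $B\cap X\subseteq X_{i_0}$ for some $i_0\in I$. Consequently $p\in B\cap X\subseteq X_{i_0}$ with $B\cap X$ open in $X$, whence $p\in\Int_X(X_{i_0})=W_{i_0}$. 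This gives $V=\bigcup_{i\in I}W_i$, a small union of definable open sets.

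The one point that needs care — and the reason one passes to the interiors $W_i$ rather than using the $X_i$ themselves, which need not be open — is precisely the implication ``a definable subset of a small $\bigvee$-definable union lies inside one member of the union'': this is where saturation enters, and I expect it to be the only real obstacle; everything else is routine. If one wants the statement for $X$ merely a subset of a definable set, hence possibly only $\bigvee$-definable as in the case $X=\tau^{-1}(O)$, one argues the same way with relatively definable open sets, applying the saturation step along a presentation $X=\bigcup_k X^{(k)}$; this changes nothing essential.
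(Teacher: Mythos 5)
Your argument is correct and is essentially the paper's own proof: pass to a directed (small) presentation of $V$, claim $V$ is the union of the interiors of its members, and for each point use openness to find a definable open neighbourhood inside $V$, which by saturation (plus directedness) lies in a single member, hence the point lies in that member's interior. The extra care you take about relative interiors and the case where $X$ itself is only $\bigvee$-definable is a harmless elaboration of the same idea.
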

\begin{proof}[Proof of Claim.]
Write $V$ as a small directed union $\bigcup_{i\in I}D_i$ of definable sets $D_i$. We claim that $V$ is the union  $\bigcup_{i\in I} \Int (D_i)$ of the interiors of the sets $D_i$. To this aim, let $x\in V$. Since $V$ is open there is a definable open neighbourhood $U$ of $x$ contained in $\bigcup_{i\in I}D_i$. Hence by saturation there is some $i\in I$ with $U\subset D_i$. But then $x \in \Int(D_i)$. 
\end{proof}

\bd \label{def-small} Let $\ca P$ be a family of subsets of a given set. A set is {\bf $\ca P$-small} if it is contained in some member of $\ca P$. A function is $\ca P$-small if its image is $\ca P$-small. \ed

\begin{claim} \label{subdivision}
\begin{enumerate}
\item Given a definable subset $D$ of $X$, there are finitely many sets in $\ca V$ whose union covers $D$. Moreover there is a finite partition of $D$ into definable sets whose closures are $\ca V$-small.  
\item Given a definable path $a$ in $X$ there is a subdivision $a = a_1 + \ldots + a_n$ of $a$ such that each $a_i$ is $\ca V$-small. 
\end{enumerate}
\end{claim}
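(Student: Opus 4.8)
The plan is to deduce both parts from the claim just proved (every $\bigvee$-definable open subset of $X$ is a small union of definable open subsets of $X$), together with two standard o-minimal facts: a definable set is \emph{logically compact}, in the sense that every small cover of it by definable sets has a finite subcover (this is just saturation), and every definable set, with its subspace topology, is definably normal, so that the definable shrinking lemma holds. For part (1), note that since $f$ is surjective and $\ca U$ covers $Y$, the family $\ca V$ covers $X$, hence covers $D$; moreover $\ca V$ is small. By the preceding claim each $V\in\ca V$ is a small union of definable open subsets of $X$; collecting all these sets over all $V\in\ca V$ yields a small family $\ca W$ of definable open subsets of $X$ which refines $\ca V$ and still covers $D$. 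By logical compactness of the definable set $D$, finitely many members $W_1,\ldots,W_m$ of $\ca W$ cover $D$; picking $V_i\in\ca V$ with $W_i\subseteq V_i$ gives the first assertion. For the partition, apply the definable shrinking lemma to the finite relatively open cover $\{W_i\cap D\}_{i=1}^m$ of the definably normal space $D$: it produces definable open subsets $U_1,\ldots,U_m$ of $D$ covering $D$ with $\overline{U_i}\subseteq W_i\cap D$ (closure in $D$). Setting $D_i:=U_i\setminus(U_1\cup\cdots\cup U_{i-1})$ gives a finite partition of $D$ into definable sets with $\overline{D_i}\subseteq\overline{U_i}\subseteq W_i\subseteq V_i\in\ca V$, as required.

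Part (2) follows by running exactly the argument of part (1) with the interval $I=[0,1]$ of $M$ in place of both $X$ and $D$: a definable path $a\colon I\to X$ is continuous, so $\{a^{-1}(V):V\in\ca V\}$ is a small cover of the definable set $I$ by $\bigvee$-definable open subsets of $I$, and the argument above yields a finite partition $I=D_1\sqcup\cdots\sqcup D_m$ into definable sets with $\overline{D_i}\subseteq a^{-1}(V_i)$ for suitable $V_i\in\ca V$. By o-minimality each $D_i$ is a finite union of points and open intervals; let $0=t_0<t_1<\cdots<t_n=1$ list all the endpoints that occur, together with $0$ and $1$. Each open interval $(t_{j-1},t_j)$ contains no $t_k$, hence lies in exactly one $D_{i(j)}$, so $[t_{j-1},t_j]=\overline{(t_{j-1},t_j)}\subseteq\overline{D_{i(j)}}\subseteq a^{-1}(V_{i(j)})$, that is $a([t_{j-1},t_j])\subseteq V_{i(j)}\in\ca V$. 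Letting $a_j$ be the restriction of $a$ to $[t_{j-1},t_j]$ (suitably reparametrized), we obtain $a=a_1+\cdots+a_n$ with each $a_j$ being $\ca V$-small.

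The only genuinely delicate point is the partition in part (1), namely deciding in which space the closures of the pieces are taken: $X$ itself is not definable and need not be closed in the ambient affine space, so definable normality cannot be invoked for $X$ directly. Working inside the definably normal definable set $D$, rather than inside $X$, circumvents this; and when $D$ happens to be closed in the ambient space — as in all the later applications, where $D$ is a cube $[0,1]^k$ — the closure in $D$ coincides with the closure in the ambient space, so ``closure'' is unambiguous there. Everything else is a routine combination of the preceding claim with saturation and the shrinking lemma.
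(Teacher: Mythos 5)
Your argument is correct, and it reaches the same two conclusions as the paper, but the implementation differs at each step, so let me compare. For the finite subcover in (1), the paper first uses the already established fact that $f(D)$ is a compact subset of $Y$ (Claim 4.5) to extract finitely many $U_1,\dots,U_n\in\ca U$ covering $f(D)$, and only then applies the preceding claim plus saturation to the finitely many $V_i=f^{-1}(U_i)$; you instead refine the whole (small) family $\ca V$ by the preceding claim and apply saturation to $D$ directly. Your route is fine, but it does need the observation that $\ca U$ (hence $\ca V$ and your family $\ca W$) is small --- this follows from second countability of $Y$ (there are at most $2^{\aleph_0}$ open subsets of $Y$, which the paper's conventions deem small), and you should say so; the paper's detour through compactness of $f(D)$ avoids this bookkeeping altogether. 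For the partition, the paper shrinks its definable open sets $O_i$ so that $\ov{O_i}\subset V_i$ and then takes a cell decomposition of $D$ compatible with the $O_i$, whereas you use the definable shrinking lemma inside the definably normal set $D$ and the successive--differences trick; these are interchangeable, and your version has the merit of making explicit both the shrinking step (which the paper dispatches with ``by shrinking each $O_i$ if necessary'') and the fact that ``closure'' must be read relative to $D$ (ambient closures could indeed escape $X$, so this is the right reading, and it suffices for every later use since there $D$ is definably compact --- though note that in Claim 4.10(2) the relevant $D$ is $\img(F)$, the image of the homotopy, not a cube; being closed and bounded it causes no problem). Finally, for (2) the paper applies part (1) to $\img(a)$ and then takes a cell decomposition of $I$ compatible with the preimages $a^{-1}(P)$, $P\in\ca P$, reading off the subdivision from the endpoints; you run the whole argument directly on $I$ with the cover $\{a^{-1}(V):V\in\ca V\}$, which bypasses part (1) and is slightly more self-contained, at the cost of re-checking that the preceding claim and the shrinking lemma apply in $I$ (they do, as you note, and your endpoint argument via o-minimality of the pieces is exactly the paper's implicit one). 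Both proofs rely equally on the sets $f^{-1}(U)$, $U\in\ca U$, being open in $X$, which the paper asserts in its own proof, so no additional gap is introduced on your side.
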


\begin{proof}[Proof of Claim.] 

(1) $f(D)$ is a compact subset of $Y$, so it can be covered by finitely many members $U_1, \ldots, U_n$ of $\ca U$. It then follows that $D \subset V_1 \cup  \ldots \cup V_n$ with $V_i := f^{-1}(U_i) \in \ca V$. Each $V_i$ is $\bigvee$-definable and open, so by point (1) and saturation there are definable open subsets $O_i$ of $V_i$ with $D \subset O_1 \cup \ldots \cup O_n$. By shrinking each $O_i$ if necessary, we may assume that the closure of $O_i$ is included in $V_i$ for all $i$. It is now sufficient to take a cell decomposition of $D$ compatible with $O_1, \ldots, O_n$. 

(2) Take a finite partition $\ca P$ of $\img (a)$ into definable sets whose closures are $\ca V$-small. Then take a cell decomposition of $I = \dom(a)$ compatible with $f^{-1}(D)$ for all $D \in \ca P$. The endpoints of the decomposition yield the desired subdivision of $a$. 
\end{proof}

\bd 
\begin{enumerate} 
\item Given two definable paths $a$ and $b$ in $X$ with the same endpoints, we say that they are {\bf $\mathcal{V}$-contiguous} (written $a\sim_\mathcal{V}b$) if there are definable paths $u,v,a',b'$ in $X$ such that $a=u+a'+v$ and $b=u+b'+v$ and
$\img(a')\cup \img(b')\subset V$ for some $V \in\mathcal{V}$. Let
$\sim_\mathcal{V}^\star$ be the transitive closure of $\sim_\mathcal{V}$.
Two paths in the $\sim_\mathcal{V}^\star$ relation are said to be
{\bf $\mathcal{V}$-equivalent}.
\item Similarly, dropping all definability conditions, one defines the relation of $\ca U$-contiguity and $\ca U$-equivalence between paths in $Y$.
\end{enumerate}
\ed

\begin{claim} \label{contiguous} 
\begin{enumerate}
\item Two paths in $Y$ are $\ca U$-equivalent if and only if they are homotopic. 
\item Two definable paths in $X$ are $\ca V$-equivalent if and only if they are definably homotopic. 
\end{enumerate}
\end{claim}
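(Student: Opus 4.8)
The plan is to prove both directions of both statements by one and the same scheme: the implication ``contiguous $\Rightarrow$ (definably) homotopic'' by a direct one-step argument, and its converse by subdividing a homotopy finely enough that it can be crossed by a chain of contiguity moves. I would carry out the definable statement (2) in full; statement (1) is then obtained by repeating the argument verbatim, replacing the o-minimal ingredients by their classical counterparts (compactness and the Lebesgue number lemma for $H\colon [0,1]^2 \to Y$, and metrizability of the second countable Tychonoff space $Y$).

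For ``$\ca V$-contiguous $\Rightarrow$ definably homotopic'': if $a \sim_{\ca V} b$, write $a = u + a' + v$ and $b = u + b' + v$ with $\img(a') \cup \img(b') \subseteq V$ for some $V \in \ca V$. By Claim \ref{covers}(2), applied to the pair $V, V$, the set $V$ lies in a definably simply connected subset $S$ of $X$; hence the definable paths $a'$ and $b'$, which have the same endpoints and image in $S$, are definably homotopic rel endpoints, and therefore so are $a = u + a' + v$ and $b = u + b' + v$. Since ``definably homotopic rel endpoints'' is an equivalence relation it contains the transitive closure $\sim_{\ca V}^\star$, which is the implication. The corresponding half of (1) is identical, using Claim \ref{covers}(1).

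For the converse, let $H\colon I \times I \to X$ be a definable homotopy rel endpoints from $a$ to $b$ (reparametrize $a,b$ to the common domain $I = [s_0,s_1]$), so $H(s_0,t) \equiv a(0)$ and $H(s_1,t) \equiv a(1)$. First I produce a definable grid $s_0 = \sigma_0 < \dots < \sigma_m = s_1$, $0 = \tau_0 < \dots < \tau_n = 1$ such that $H$ sends every closed cell $R_{ij} = [\sigma_i,\sigma_{i+1}] \times [\tau_j,\tau_{j+1}]$ into a member of $\ca V$. To do this I apply Claim \ref{subdivision}(1) to the definable set $H(I^2)$ to get finitely many definable open $\ca V$-small sets $O_1, \dots, O_N$ covering $H(I^2)$; then $\{H^{-1}(O_k)\}$ is a finite definable open cover of $I^2$, and writing $F_k = I^2 \setminus H^{-1}(O_k)$ and $d_k$ for the definable continuous function ``box-distance to $F_k$'' (a fixed large constant when $F_k = \emptyset$), the function $g = \max_k d_k$ is definable, continuous, and strictly positive on $I^2$ because $\bigcap_k F_k = \emptyset$; as $I^2$ is definably compact, $g$ is bounded below by some $\delta > 0$, and any grid of mesh $< \delta$ works. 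Then I run the classical ``push across one cell at a time'' argument: for the $j$-th row, with $p_j(s) = H(s,\tau_j)$, I interpolate between $p_j$ and $p_{j+1}$ by staircase paths that follow $H$ along the top edges of $R_{0j}, \dots, R_{(i-1)j}$, then down the vertical edge at $\sigma_i$, then along the bottom edges of $R_{ij}, \dots, R_{(m-1)j}$; consecutive staircases coincide except on the part lying in $R_{ij}$, whose $H$-image lies in a member of $\ca V$, so they are $\ca V$-contiguous. (At $i = 0, m$ the vertical edge is a constant path since $H$ is constant on the vertical edges of $I^2$; I set $q_0 := p_j$, $q_m := p_{j+1}$ and check the two extreme steps separately, using that the relevant end segments of $R_{0j}$, resp.\ $R_{(m-1)j}$, already share the point $a(0)$, resp.\ $a(1)$.) Chaining over $i$ gives $p_j \sim_{\ca V}^\star p_{j+1}$, and chaining over $j$ gives $a = p_0 \sim_{\ca V}^\star p_n = b$. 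All paths here are definable, being concatenations (up to reparametrization) of the images under $H$ of horizontal and vertical grid edges, so the whole argument lives in $\dfund X X$.

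The main obstacle is the first part of the converse: producing a definable grid adapted to the cover $\ca V$, which is a cover by $\bigvee$-definable rather than definable sets. Claim \ref{subdivision}(1) is precisely what removes this difficulty, reducing $\ca V$ to a finite family of definable $\ca V$-small open sets before the o-minimal compactness argument is applied; after that, the combinatorial homotopy argument is routine and transfers unchanged to the definable category.
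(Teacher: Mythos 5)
Your easy direction (contiguous $\Rightarrow$ homotopic, via Claim \ref{covers}) and your treatment of the classical statement (1) over $\R$ are fine and agree with the paper. The gap is in the converse direction of (2): the reduction to a finite rectangular grid rests on a definable Lebesgue-number argument, and this does not survive in a non-archimedean model. The minimum $\delta>0$ of your definable function $g=\max_k d_k$ on $I\times I$ exists by definable compactness, but it may be infinitesimal with respect to the length of $I$; in that case a grid of mesh $<\delta$ would need more than $1/\delta$ subdivisions, which is not a finite number, whereas a subdivision $a=a_1+\dots+a_n$ and a chain of contiguity moves must be genuinely finite. The failure is not an artifact of this particular estimate: a finite definable open cover of $I\times I$ may admit \emph{no} finite subordinate rectangular grid at all. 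For example, the cover of $[0,1]^2$ by $\{y<x+\eps\}$ and $\{y>x-\eps\}$ with $\eps$ infinitesimal admits none, since any grid rectangle meeting the diagonal and contained in one of the two sets cuts out a piece of the diagonal of first-coordinate length $<\eps$, so unboundedly many rectangles would be required. Nothing in the construction of the definable open sets $O_k$ (obtained by saturation inside the $\bigvee$-definable members of $\ca V$, as in the proof of Claim \ref{subdivision}; note the statement of that claim gives a partition with $\ca V$-small closures, not open sets) excludes such infinitesimal overlaps after pulling back along the definable homotopy.

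The paper avoids exactly this point by replacing the metric grid with a cell decomposition: by Claim \ref{subdivision}(1) the image of the homotopy is partitioned into finitely many definable sets $D_1,\dots,D_n$ whose closures are $\ca V$-small, and one takes a cell decomposition of $I\times I$ compatible with the sets $F^{-1}(D_i)$; finiteness then comes from the cell decomposition theorem, not from any uniform mesh bound. The price is that the pieces are cells rather than rectangles, so your staircase induction (which is correct as far as it goes, given a rectangular grid) must be replaced by the corresponding combinatorial argument over cells; this is the content of the o-minimal van Kampen argument of \cite{BeOt:02} that the paper invokes. To repair your proof, derive the finite subordinate decomposition of the homotopy square from cell decomposition rather than from a Lebesgue number, and then adapt the contiguity-move bookkeeping to the cells.
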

\begin{proof}[Proof of Claim.]
One direction is trivial (equivalent implies homotopic). We prove the other direction. 

(1) Implicit in the proof of the van Kampen theorem in \cite{Br:68}.  
One argues as follows. Given a homotopy $F:I\times I \to Y$ from $a$ to $b$, we can subdivide the homotopy square $I\times I$ into small squares so that each is mapped by $F$ into an open set of $\ca V$. If $n$ is the number of squares in the subdivision of $I\times I$, it is easy to see that $a$ is $\ca V$-equivalent to $b$ by a sequence of $n$ contiguity moves. The converse is trivial, since two $\ca U$-contiguous paths differ for a subpath contained in a simply connected set. 

(2) Let  $F\colon I\times I \to X$ be a definable homotopy between the definable paths $a=F(0,\cdot)$ and $b= F(1,\cdot)$ in $X$. By Claim \ref{subdivision} the image of $F$ can be partitioned into finitely many definable sets $D_1, \ldots, D_n$ whose closures are $\ca V$-small. Consider a cell decomposition of the homotopy square $I\times I$ compatible with $F^{-1}(D_i)$ for all $i$. We can then reason as above with the role of the small squares be replaced by the cells of the decomposition. For the details see the proof of the o-minimal van Kampen theorem in \cite{BeOt:02}. 
\end{proof}

We are now ready to define $f_* \colon \dfund X X \to \fund {Y} {Y}$.

\bd \label{defi} On the object part of the groupoids we set $f_*=f$. 
Given a definable path $a$ in $X$ and a path $b$ in $Y$ we say that $a$ corresponds to $b$ if there is a subdivision $a=a_1+\ldots + a_n$ into $\ca V$-small definable paths $a_i$, and a subdivision $b=b_1+\ldots + b_n$ into $\ca U$-small paths such that, for all $i\leq n$, the endpoints of $a_i$ are mapped by $f\colon X \to Y$ to the respective endpoints of $b_i$. If $a$ corresponds to $b$, then we define $f_*([a]) = [b]$. 
\ed 

\begin{claim}
$f_*$ is well defined. 
\end{claim}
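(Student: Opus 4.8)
The plan is to show two things: first, that for every definable path $a$ in $X$ there exists at least one path $b$ in $Y$ corresponding to $a$ (existence), and second, that if $a$ corresponds to both $b$ and $b'$, then $b$ and $b'$ are homotopic, and moreover that the assignment respects the equivalence relations (definably homotopic $a$'s go to homotopic $b$'s), so that the formula $f_*([a])=[b]$ unambiguously defines a function on $\dfund{X}{X}$.

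For \textbf{existence}, I would start from Claim \ref{subdivision}(2) to get a subdivision $a=a_1+\ldots+a_n$ with each $a_i$ being $\ca V$-small, say $\img(a_i)\subseteq V_i=f^{-1}(U_i)$ with $U_i\in\ca U$. Let $p_0,\ldots,p_n$ be the endpoints of the $a_i$, so $f(p_{i-1}),f(p_i)\in U_i$. Since each $U_i$ is path connected (Claim \ref{covers}(1)), I can choose a path $b_i$ inside $U_i$ from $f(p_{i-1})$ to $f(p_i)$. Setting $b=b_1+\ldots+b_n$ gives a path in $Y$ corresponding to $a$; each $b_i$ is $\ca U$-small by construction, and the endpoint condition holds by design.

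For \textbf{well-definedness}, suppose $a$ corresponds to $b$ via a subdivision into $n$ pieces, and $a'$ corresponds to $b'$ via a subdivision into $m$ pieces, where $a$ and $a'$ are definably homotopic (rel endpoints) in $X$. First I would reduce to a common refinement: refining the subdivision of $a$ to a finer one still yields $\ca V$-small pieces, and one can correspondingly refine $b$ by inserting constant paths (or subdividing the $b_i$, which stay $\ca U$-small since $\ca U$'s members are path connected and we only need the endpoint-matching), so without loss of generality $a$ and $a'$ are subdivided into pieces lying over a common cell decomposition of the domain interval, with matched endpoints mapping correctly. The key step is then: \emph{if $a_i$ and $a_i'$ are both $\ca V$-small with $\img(a_i)\cup\img(a_i')\subseteq V$ for some $V\in\ca V$ and they share endpoints, and $b_i,b_i'$ are the corresponding $\ca U$-small pieces with matched endpoints, then $b_i$ and $b_i'$ are $\ca U$-contiguous.} This follows because $V=f^{-1}(U)$ for some $U\in\ca U$, so both $b_i$ and $b_i'$ have image in $U$, which is contained in a simply connected subset of $Y$ by Claim \ref{covers}(1), hence $b_i\sim_{\ca U}b_i'$. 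Concatenating these contiguity moves shows $b\sim_{\ca U}^\star b'$, so $[b]=[b']$ by Claim \ref{contiguous}(1). To get from the case $a=a'$ to the case of merely definably homotopic $a,a'$, I use Claim \ref{contiguous}(2): definably homotopic implies $\ca V$-equivalent, so it suffices to handle one $\ca V$-contiguity move $a\sim_{\ca V}a'$, i.e.\ $a=u+c+v$, $a'=u+c'+v$ with $\img(c)\cup\img(c')\subseteq V\in\ca V$. Choosing a subdivision of $a$ compatible with the decomposition $u,c,v$ and refining $a'$ compatibly, the pieces over $u$ and $v$ can be sent to the same $b$-pieces, and the single pieces $c,c'$ (both inside $V=f^{-1}(U)$) get sent to $\ca U$-small pieces inside $U$, hence $\ca U$-contiguous by simple connectedness of a set containing $U$. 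This completes the argument.

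The step I expect to be the main obstacle is the \emph{common-refinement reduction}: making precise that one may always pass to subdivisions of $a$ and $a'$ that are aligned (same partition points of the domain, compatible with the relevant $\ca V$-sets and with the decomposition witnessing a contiguity move) while keeping track of the corresponding $b$-subdivisions and ensuring all refined pieces remain $\ca V$-small resp.\ $\ca U$-small with the endpoint-matching preserved. The technical care here lies in interleaving two subdivisions and in checking that inserting degenerate (constant) segments or subdividing a $b_i$ inside its $U_i$ does not disturb $\ca U$-smallness — which is immediate once one notes that a point and a sub-path of a path lying in $U_i$ still lie in $U_i$. Once this bookkeeping is in place, every remaining verification is a direct appeal to Claims \ref{covers}, \ref{subdivision}, and \ref{contiguous}.
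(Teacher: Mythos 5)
There is a genuine gap, and it sits exactly at the point you flagged as bookkeeping. In Definition \ref{defi} a correspondence only pins down each $b_i$ up to being $\ca U$-small and having the prescribed endpoints $f(p_{i-1}),f(p_i)$; it does \emph{not} say that $b_i$ lies in the particular $U\in\ca U$ with $\img(a_i)\subseteq f^{-1}(U)$. So your key step ("$V=f^{-1}(U)$ for some $U\in\ca U$, so both $b_i$ and $b_i'$ have image in $U$") is unjustified: $b_i$ and $b_i'$ may sit in two different members of $\ca U$, and then neither the contiguity claim nor the appeal to a simply connected set containing $U$ goes through as written. The missing ingredient is the observation (Step 1 of the paper's proof) that any two $\ca U$-small paths with the \emph{same endpoints} are homotopic rel endpoints, because the two members of $\ca U$ containing them intersect (they share an endpoint), so by Claim \ref{covers}(1) their union lies in a simply connected set; this is what licenses replacing each $b_i$ by a homotopic path that really does lie in $U$, after which your argument can be completed.

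The same omission undermines your common-refinement reduction, which you cannot carry out by "inserting constant paths" or "subdividing the $b_i$". If you refine $a_i$ at a new point $q$, the matching condition of Definition \ref{defi} requires the new $b$-pieces to have $f(q)$ as an endpoint, and $f(q)$ need in general not lie on $\img(b_i)$ at all (it only lies in $U$; recall $f\circ a_i$ is not a path in $Y$). So the refined pair obtained by cutting or padding $b$ is simply not a correspondence, and the issue is not $\ca U$-smallness but endpoint-matching. The correct treatment (the paper's Step 2) chooses \emph{new} $\ca U$-small pieces $b_i^j$ with the right endpoints and then proves $b_i\simeq b_i^1+\cdots+b_i^k$ by first using the Step 1 observation to move all these paths inside the single set $U$ and then invoking that $U$ is contained in a simply connected subset of $Y$. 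Apart from this, your overall architecture (treat subdivision-independence first, then reduce homotopy invariance to a single $\ca V$-contiguity move via Claim \ref{contiguous}) is the same as the paper's; once Step 1 and the corrected subdivision-independence argument are in place, your contiguity-move case works as in the paper's Step 3.
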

\begin{proof}[Proof of Claim.]
Step 1. First note that two $\ca V$-small definable paths $a,a'$ in $X$ with the same endpoints are definably homotopic. In fact if $V\in \ca V$ and $V'\in \ca V$ contain the images of $a,a'$ respectively, then by Claim \ref{covers} $V \cup V'$ is contained in a definably simply connected set, and therefore $a$ is definably homotopic to $a'$. Similarly two $\ca U$-small paths $b,b'$ in $Y$ with the same endpoints are homotopic. So $f_*([a]) = [b]$ is well defined at least when $a,b$ are small. 

Step 2. We next show that if $a$ corresponds to $b$, then the homotopy class of $b$ is determined by $a$. So suppose that $a = a_1+\ldots + a_n$ is a subdivision of $a$ into $\ca V$-small definable paths, let $a_i$ correspond to $b_i$ (a path in $Y$), and let $b= b_1+\ldots +b_n$. By Step 1 the homotopy class of each $b_j$ is determined by the corresponding $a_j$, but we must prove that the homotopy class of $b$ does not depend on the chosen subdivision of $a$. Since any two subdivisions have a common refinement, it suffices to consider the case in which one of the $a_i$ is further subdivided into $\ca V$-small paths. So without loss of generality suppose $i=1$ and let $a_1 = a_1^1+\ldots + a_1^k$ be a subdivision of $a_1$ into $\ca V$-small paths. We must show that $b_1$ is homotopic to $b_1^1+\ldots+b_1^k$ where each $b_1^j$ is such that $a_1^j$ corresponds to $b_1^j$. To this aim let $U\in \ca U$ be such that $\img (a_1) \subset f^{-1}(U)$. Then in particular the endpoints of $a_1$ and of each $a_1^j$ are in $f^{-1}(U)$. Therefore the endpoints of $b_1$ and of each $b_1^j$ are in $U$. Reasoning as in Step 1  we can then assume that the image of $b_1$ and of each $b_1^j$ is entirely contained in $U$ since we can reduce to this case replacing each of these paths by a homotopic path. After these reductions, $b_1$ and $b_1^1+\ldots + b_1^k$ are two paths in $U$ with the same endpoints, and therefore they are homotopic (since $U$ is contained in a simply connected set). 

Step 3. Finally suppose that $a$ is definably homotopic to $a'$, and let us show that $b$ is homotopic to $b'$, where $a$ corresponds to $b$ and $a'$ to $b'$. 
By Claim \ref{contiguous} we can assume that $a'$ is $\ca V$-contiguous to $a$. So we can write $a= u+z+v$ and $a' = u + z' + v$ with $\img (z) \cup \img (z') \subset V$ for some $V\in \ca V$. Choose subdivisions of $a$ and $a'$ such that $z$ and $z'$ are segments of the chosen subdivisions. By Step 2 we may assume that $b$ and $b'$ are obtained from $a,a'$ using these subdivisions. So we can assume that $b$ and $b'$ are $\ca U$-contiguous. Therefore, by Claim \ref{contiguous}, they are homotopic.   
\end{proof} 

\begin{claim} \label{u-indep} The definition of $f_*$ does not depend on the particular choice of the cover $\ca U$ in Claim \ref{covers}. 
\end{claim}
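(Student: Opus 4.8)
Write $f_*^{\ca U}$ for the morphism of groupoids built from a cover $\ca U$ as in Claim \ref{covers}, with associated $\ca V=\{f^{-1}(U)\mid U\in\ca U\}$; by the preceding claims (which go through verbatim for any such cover) $f_*^{\ca U}$ is well defined, equals $f$ on objects, and takes the value $[b]$ at $[a]$ for \emph{every} path $b$ corresponding to $a$ in the sense of Definition \ref{defi}. The plan is to show first that $f_*^{\ca U}$ is unchanged when $\ca U$ is refined, and then to exhibit, for any two covers $\ca U_1,\ca U_2$ as in Claim \ref{covers}, a third such cover $\ca U_3$ refining both; then $f_*^{\ca U_1}=f_*^{\ca U_3}=f_*^{\ca U_2}$.

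For the first point, suppose $\ca U'$ refines $\ca U$, both as in Claim \ref{covers}. If $U'\subseteq U$ with $U'\in\ca U'$ and $U\in\ca U$, then $f^{-1}(U')\subseteq f^{-1}(U)$, so every $\ca U'$-small set is $\ca U$-small and every $\ca V'$-small set is $\ca V$-small. Given a definable path $a$ in $X$, pick subdivisions $a=a_1+\ldots+a_n$ into $\ca V'$-small pieces and $b=b_1+\ldots+b_n$ into $\ca U'$-small pieces witnessing $f_*^{\ca U'}([a])=[b]$; by the previous sentence these same subdivisions witness that $a$ corresponds to $b$ for the cover $\ca U$ as well, so $f_*^{\ca U}([a])=[b]=f_*^{\ca U'}([a])$. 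Since every morphism of $\dfund X X$ is the class of a definable path, $f_*^{\ca U}=f_*^{\ca U'}$.

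For the common refinement I would rerun the construction in the proof of Claim \ref{covers} with one added constraint. For each $y\in Y$ first choose $U_1^y\in\ca U_1$ and $U_2^y\in\ca U_2$ with $y\in U_1^y\cap U_2^y$, and then run that construction \emph{inside} the open neighbourhood $U_1^y\cap U_2^y$ of $y$: pick a decreasing family $\{O_n\}$ of simply connected open subsets of $U_1^y\cap U_2^y$ containing $y$ with $\bigcap_n O_n=\{y\}$ and $\ov{O_{n+1}}\subseteq O_n$, take $n$ with $f^{-1}(\ov{O_n})\subseteq C_y$, and set ${U'}_y:=O_n$. Let $\ca U_3$ be a star refinement of $\{{U'}_y\mid y\in Y\}$ with path-connected members. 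As in the proof of Claim \ref{covers}, $\ca U_3$ satisfies conditions (1) and (2) there; moreover each of its members lies in some ${U'}_y\subseteq U_1^y\cap U_2^y$, hence inside a member of $\ca U_1$ and inside a member of $\ca U_2$, so $\ca U_3$ refines both. The previous paragraph then yields $f_*^{\ca U_1}=f_*^{\ca U_3}=f_*^{\ca U_2}$.

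The only real content is the construction of $\ca U_3$: an arbitrary common refinement of $\ca U_1$ and $\ca U_2$ need not satisfy condition (2) of Claim \ref{covers}, since the union of two intersecting preimages need not lie in a definably simply connected set. One therefore has to reenter the proof of that claim and carry the definably simply connected sets $C_y$ along, rather than refining after the fact. Everything else reduces to the bookkeeping observation that ``$\ca V'$-small $\Rightarrow$ $\ca V$-small'' and ``$\ca U'$-small $\Rightarrow$ $\ca U$-small'' under refinement.
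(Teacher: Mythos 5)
Your proof is correct, and its first half (invariance under refinement, via ``$\ca U'$-small $\Rightarrow$ $\ca U$-small'' and ``$\ca V'$-small $\Rightarrow$ $\ca V$-small'') is exactly the paper's argument. Where you diverge is the common refinement: the paper simply takes the connected components of the pairwise intersections $U_1\cap U_2$ ($U_1\in\ca U_1$, $U_2\in\ca U_2$) and asserts this still satisfies Claim \ref{covers}, whereas you rerun the whole construction of Claim \ref{covers} inside the intersections $U_1^y\cap U_2^y$, carrying the sets $C_y$ along. Your construction is valid, but the worry you use to justify it --- that an arbitrary common refinement ``need not satisfy condition (2), since the union of two intersecting preimages need not lie in a definably simply connected set'' --- is unfounded: if $W_1\subseteq U_1$ and $W_2\subseteq U_2$ with $U_1,U_2\in\ca U_1$ and $f^{-1}(W_1)\cap f^{-1}(W_2)\neq\emptyset$, then $f^{-1}(U_1)\cap f^{-1}(U_2)\neq\emptyset$, so condition (2) for $\ca U_1$ already places $f^{-1}(U_1)\cup f^{-1}(U_2)\supseteq f^{-1}(W_1)\cup f^{-1}(W_2)$ inside a definably simply connected set; likewise condition (1) is inherited, and definable path-connectedness of the preimages of the (connected, open) components follows from Claim \ref{connected}(2). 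So both halves of Claim \ref{covers} pass automatically to any refinement by connected open sets, which is why the paper's one-line common refinement suffices. What your heavier construction buys is self-containment --- it does not invoke Claim \ref{connected}(2) or the inheritance observation --- at the cost of repeating the saturation argument of Claim \ref{covers}; the paper's route is the more economical one.
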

\begin{proof}[Proof of Claim] 
Suppose $\ca U'$ is a refinement of $\ca U$ still satisfying the conditions in Claim \ref{covers}. Then clearly if we define $f_*([a])$ using $\ca U'$ instead of $\ca U$ we get the same function (since a subdivision of $a$ compatible with the preimages of the sets in $\ca U'$ is also compactible with the preimages of the sets in $\ca U$). Now it it suffices to observe that for any two coverings $\ca U$ and $\ca U'$ satisfying the conditions in Claim \ref{covers}  there is a common refinement which still satisfies the conditions (take the connected components of the pairwise intersections of an element of $\ca U$ and an element of $\ca U'$). 
\end{proof}  

\begin{claim}
$f_*$ is a morphism and satisfies (1) and (\ref{locality}) in Theorem \ref{top-lemma}.
\end{claim}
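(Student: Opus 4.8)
The plan is to verify the three requirements separately, reading each off from the correspondence of Definition~\ref{defi}. Property~(1) is immediate from that definition: if $a$ corresponds to $b$ via matching subdivisions $a=a_1+\ldots+a_n$ and $b=b_1+\ldots+b_n$, then the initial and final points of $b$ are the $f$-images of those of $a$. For the morphism property I would first check that $f_*$ preserves identities, which holds because the constant path $c_x$ at a point $x$ is $\ca V$-small (the family $\ca V$ covers $X$, since $\ca U$ covers $Y$ and $f$ is onto) and corresponds, via trivial length-one subdivisions, to the constant path $c_{f(x)}$. For composition, take composable definable paths $a$ and $c$ with common point $p$ and pick, by Claim~\ref{subdivision}, subdivisions $a=a_1+\ldots+a_n$ and $c=c_1+\ldots+c_m$ into $\ca V$-small paths; then (up to reparametrization) $a+c=a_1+\ldots+a_n+c_1+\ldots+c_m$ is again such a subdivision. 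Choosing $b_i$ corresponding to $a_i$ and $d_j$ to $c_j$, the endpoint conditions agree at $p$, so $b:=b_1+\ldots+b_n$ and $d:=d_1+\ldots+d_m$ are composable, $b+d$ corresponds to $a+c$, and $f_*([a+c])=[b+d]=[b]+[d]=f_*([a])+f_*([c])$.

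The real content is the locality property~(\ref{locality}). Fix an open $O\subseteq Y$ and a definable path $a$ with $\img a\subseteq f^{-1}(O)$. Since $Y$ is second countable and Tychonoff it is metrizable; fixing a compatible metric, Claim~\ref{compact} gives that $K:=f(\img a)$ is compact, and $K\subseteq O$, so some $\delta$-neighbourhood of $K$ lies in $O$. By Claim~\ref{u-indep} I may compute $f_*$ with any cover as in Claim~\ref{covers}, and I would use one all of whose members have diameter $<\delta$: this can be arranged by intersecting, in the proof of Claim~\ref{covers}, each set $U'_y$ with the ball of radius $\delta/2$ about $y$ before passing to the star-refinement. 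Now subdivide $a=a_1+\ldots+a_n$ into $\ca V$-small definable paths, say $\img a_i\subseteq f^{-1}(U_i)$ with $U_i\in\ca U$. Then $f(\img a_i)$ is a nonempty subset of $K$, so $U_i$ meets $K$ and therefore $U_i\subseteq O$; as $U_i$ is path connected and contains the $f$-images of the endpoints of $a_i$, I choose a path $b_i$ inside $U_i$ joining those two points. Then $a_i$ corresponds to $b_i$, whence $a$ corresponds to $b:=b_1+\ldots+b_n$ and $f_*([a])=[b]$ with $\img b\subseteq\bigcup_i U_i\subseteq O$.

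The main obstacle is this last step: the verification of (1) and of the morphism property is a routine translation through Definition~\ref{defi}, whereas for locality one must produce a cover adapted to the prescribed open set $O$, and that is exactly where metrizability of $Y$ and compactness of $f(\img a)$ enter.
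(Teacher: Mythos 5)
Your verification of property (1) and of the morphism property coincides with the paper's (the paper simply observes that if $a=a_0+a_1$ and $a_0,a_1$ correspond to $b_0,b_1$, then $a$ corresponds to $b_0+b_1$; your extra check on identities via constant paths is harmless and in fact automatic in a groupoid). For the locality property (\ref{locality}) you take a genuinely different route. The paper invokes Claim \ref{u-indep} to enlarge $\ca U$ to a base of the topology of $Y$, writes $O$ as a union of a subfamily $\ca U'$ of $\ca U$, and observes that the construction of $f_*([a])$ can be carried out with a $\ca V'$-small subdivision, where $\ca V'=\{f^{-1}(U) : U\in\ca U'\}$, so that the associated $\ca U'$-small pieces of $b$ lie in $O$. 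You instead metrize $Y$ (legitimate: second countable Tychonoff is metrizable by Urysohn, and Definition \ref{star} already flags the uniform structure), use Claim \ref{compact} to get $K=f(\img a)$ compact inside $O$, pick $\delta$ with the $\delta$-neighbourhood of $K$ inside $O$, and rerun Claim \ref{covers} with the sets $U'_y$ shrunk to radius $\delta/2$, again appealing to Claim \ref{u-indep}; then \emph{any} $\ca V$-small subdivision of $a$ forces the chosen $\ca U$-small paths $b_i$ into $O$. Both arguments rest on the same two pillars (cover-independence of $f_*$ and compactness of images of definable sets); yours makes the compactness of $f(\img a)$ explicit at the cost of a metric, while the paper's keeps everything purely topological and is shorter, but leaves implicit that the subdivision can be chosen $\ca V'$-small (which is again a compactness argument as in Claim \ref{subdivision}). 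One point you should state explicitly: the shrunken sets $U'_y\cap B(y,\delta/2)$ need not be simply connected, but this does not hurt, since the conditions of Claim \ref{covers} only require that the union of two intersecting members of the star-refinement be contained in \emph{some} simply connected set (the original $U'_y$ works) and that its preimage be contained in $f^{-1}(\ov{U'_y})\subset C_y$; with that remark your argument is complete.
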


\begin{proof}[Proof of Claim] Note that in Definition \ref{defi}, if $a=a_0 + a_1$ and $a_0,a_1$ correspond to $b_0,b_1$ respectively, then $a$ corresponds to $b:=b_0+b_1$. Hence $f_*$ is a morphism.  
By construction it satisfies (1). We prove (2). 
Because of Claim \ref{u-indep}, by enlarging $\mathcal{U}$ we can suppose it to be a base of the topology
of $Y$. Given an open subset $O$ of $Y$, we can then express $O$ as the union of a subfamily $\mathcal{U}'$ of
$\mathcal{U}$. Consider
$\mathcal{V}'=\left\{f^{-1}(U):U\in\mathcal{U}'\right\}$. Given a path $a$
in $f^{-1}(O)$, the construction of $[b]=f_*([a])$ can be carried out subdividing
$a$ into $\mathcal{V}'$-small paths and associating to each of them a
$\mathcal{U}'$-small path in $Y$. The image of $b$ is clearly a subset of
$O$.
\end{proof}

\begin{claim}  \label{uniqueness} $f^*$ is the unique morphism satisfying (1) and (2) in Theorem \ref{top-lemma}. \end{claim}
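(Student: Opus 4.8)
\textit{Proof proposal for Claim \ref{uniqueness}.}
The plan is to show that properties (1) and (2) already determine the value of \emph{any} morphism of groupoids $g_*\colon \dfund X X \to \fund Y Y$ on every path class, and hence force $g_* = f_*$. Since both $g_*$ and $f_*$ agree with $f$ on the object part of the groupoids by (1), and a morphism of groupoids is determined by its action on objects together with its action on morphisms, it suffices to prove $g_*([a]) = f_*([a])$ for every definable path $a$ in $X$.

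First I would reduce to the case of a $\ca V$-small path. Given an arbitrary definable path $a$, Claim \ref{subdivision}(2) provides a subdivision $a = a_1 + \cdots + a_n$ into $\ca V$-small pieces, so that $[a] = [a_1] + \cdots + [a_n]$ in $\dfund X X$. Since both $g_*$ and $f_*$ are morphisms, $g_*([a]) = g_*([a_1]) + \cdots + g_*([a_n])$ and likewise for $f_*$; hence it is enough to treat a single $\ca V$-small path $a$, say with $\img(a) \subseteq f^{-1}(U)$ for some $U \in \ca U$ (the cover of Claim \ref{covers}).

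Now apply property (\ref{locality}) to both morphisms with $O = U$: there are paths $b$ and $b'$ in $Y$, both with image contained in $U$, such that $g_*([a]) = [b]$ and $f_*([a]) = [b']$. By property (1) the endpoints of $b$ and of $b'$ are in each case the images under $f$ of the (common) endpoints of $a$, so $b$ and $b'$ are two paths in $U$ with the same endpoints. By Claim \ref{covers}(1), $U = U \cup U$ is contained in a simply connected subset of $Y$, so $b$ and $b'$ are homotopic; therefore $g_*([a]) = [b] = [b'] = f_*([a])$, as desired. The one point that needs care is that property (\ref{locality}) a priori asserts only the \emph{existence} of a representative with image in $O$, not its homotopy class; it becomes decisive precisely because we are allowed to apply it with $O$ a member of $\ca U$, which is small enough to sit inside a simply connected set, so that the endpoints fixed by (1) already pin down the class. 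Everything else is the routine bookkeeping of subdivisions together with the morphism property, so I do not expect any genuine obstacle here.
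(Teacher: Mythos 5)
Your proposal is correct and follows essentially the same argument as the paper: subdivide an arbitrary definable path into $\ca V$-small pieces via Claim \ref{subdivision}, use the morphism property to reduce to such pieces, and then pin down the image class by combining property (\ref{locality}) with the endpoint condition (1) and the fact that the relevant small sets sit inside simply connected subsets of $Y$. The only cosmetic difference is that the paper applies (\ref{locality}) directly with $O$ a simply connected open set, whereas you take $O=U\in\ca U$ and then invoke the simply connected superset from Claim \ref{covers}(1); this is the same idea.
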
 
\begin{proof}[Proof of Claim] Let $O$ be a simply connected open subset of $Y$. 
If $a$ is a definable path in $X$ with $\img{a}\subseteq f^{-1}(O)$, condition (\ref{locality}) in Theorem \ref{top-lemma} forces $f_*([a])$ to be of the form $[b]$ for some path $b$ with $\img{b}\subseteq O$. Since $O$ is simply connected, and the endpoints of $b$ are the images of the endpoints of $a$, $[b]$ is completely determined. 
So $f_*$ is determined on the paths satisfying $\img (a) \subseteq f^{-1}(O)$ for some open simply connected subset $O\subset Y$. By Claim \ref{subdivision} we can reduce to this situation by subdividing the paths. 
\end{proof}

\begin{claim} \label{inverse}
Let $\Gamma\subset X$ be such that $f \rest {\Gamma}\colon \Gamma \to Y$ is injective. Then $f_* \rest {\fund X \Gamma}:\dfund X \Gamma\to\fund {Y} {f(\Gamma)} $ is an isomorphism.
\end{claim}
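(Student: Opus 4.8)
The plan is to construct an explicit two‑sided inverse $g\colon \fund Y {f(\Gamma)}\to \dfund X \Gamma$ of $f_*\rest{\dfund X \Gamma}$ (note that by property~(1) of Theorem~\ref{top-lemma} the map $f_*$ does carry $\dfund X \Gamma$ into $\fund Y {f(\Gamma)}$). On objects, $g$ sends $y\in f(\Gamma)$ to the unique $x\in\Gamma$ with $f(x)=y$, which makes sense precisely because $f\rest\Gamma$ is injective. On morphisms, given a path $b$ in $Y$ with endpoints in $f(\Gamma)$, subdivide $b=b_1+\ldots+b_n$ into $\ca U$‑small pieces (by compactness, cf.\ Claim~\ref{subdivision}(2)), say $\img(b_i)\subseteq U_i\in\ca U$, with vertices $y_0,\ldots,y_n$. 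Lift the vertices to points $x_i\in X$ with $f(x_i)=y_i$, choosing $x_0,x_n\in\Gamma$ (possible since $y_0,y_n\in f(\Gamma)$) and the intermediate $x_i$ arbitrarily in the nonempty fibre $f^{-1}(y_i)$. Since $y_{i-1},y_i\in U_i$, both $x_{i-1},x_i$ lie in $f^{-1}(U_i)\in\ca V$, which is definably path‑connected by Claim~\ref{covers}(2), so we may join them by a definable path $a_i$ inside $f^{-1}(U_i)$. Then $a:=a_1+\ldots+a_n$ has endpoints $x_0,x_n\in\Gamma$, and we set $g([b]):=[a]$. By construction $a$ corresponds to $b$ in the sense of Definition~\ref{defi}, so $f_*([a])=[b]$ automatically; this already shows $f_*\rest{\dfund X \Gamma}$ is surjective and, once $g$ is well defined, that $f_*\circ g=\mathrm{id}$.

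The bulk of the proof is showing that $[a]$ depends only on $[b]$, and every step runs on the single ``simply connected patch'' mechanism of Claim~\ref{covers}(2): if $V,V'\in\ca V$ meet then $V\cup V'$ lies in a definably simply connected subset of $X$, so any two definable paths in $V\cup V'$ with the same endpoints are definably homotopic relative endpoints (with $V=V'$, every member of $\ca V$ itself lies in a definably simply connected set). Using this one checks in order: (i) the choices of $U_i$ and of the connecting path $a_i\subseteq f^{-1}(U_i)$ are immaterial; (ii) replacing an intermediate lift $x_i$ by another $x_i'\in f^{-1}(y_i)$ does not change the class, since $x_i,x_i'\in f^{-1}(U_i)\cap f^{-1}(U_{i+1})=f^{-1}(U_i\cap U_{i+1})$, nonempty because $y_i\in U_i\cap U_{i+1}$, so the two resulting versions of $a_i+a_{i+1}$ lie in one definably simply connected set with common endpoints $x_{i-1},x_{i+1}$; (iii) refining the subdivision of $b$ does not change the class (same argument applied to the subinterval being split), hence the class is independent of the subdivision, since any two subdivisions have a common refinement; (iv) replacing $b$ by a homotopic path does not change the class: by Claim~\ref{contiguous}(1) it suffices to treat one $\ca U$‑contiguity move $b=u+z+v$, $b'=u+z'+v$ with $\img(z)\cup\img(z')\subseteq U\in\ca U$, and then, choosing subdivisions in which $z$ and $z'$ are single segments and lifting $u$, $v$ and the two common endpoints of $z,z'$ identically, the lifts of $z$ and of $z'$ both lie in $f^{-1}(U)$ with the same endpoints, hence are definably homotopic. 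By the argument of Claim~\ref{u-indep} one may also fix the cover $\ca U$ throughout.

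Granting well‑definedness, $g$ is a morphism of groupoids: if $[b_0],[b_1]$ are composable in $\fund Y {f(\Gamma)}$ their common endpoint lies in $f(\Gamma)$, and subdividing $b_0+b_1$ through it gives $g([b_0]+[b_1])=g([b_0])+g([b_1])$. We have $f_*\circ g=\mathrm{id}$ from the first paragraph. For $g\circ f_*=\mathrm{id}$: on objects this is the injectivity of $f\rest\Gamma$; on morphisms, take $[a]\in\dfund X \Gamma$ with $f_*([a])=[b]$ via subdivisions $a=a_1+\ldots+a_n$, $b=b_1+\ldots+b_n$ and matching endpoints $x_i\mapsto y_i$. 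Compute $g([b])$ using this subdivision of $b$, the lifts $x_i$ (legitimate: $x_0,x_n\in\Gamma$ are forced, the others allowed by (ii)), and connecting paths $a_i'\subseteq f^{-1}(U_i)$ with $\img(b_i)\subseteq U_i$; since $a_i$ is $\ca V$‑small and $a_i,a_i'$ share their endpoints, the patch property gives $a_i\simeq a_i'$, whence $g([b])=[a_1'+\ldots+a_n']=[a]$. Thus $g$ is a two‑sided inverse and $f_*\rest{\dfund X \Gamma}$ is an isomorphism onto $\fund Y {f(\Gamma)}$. The main obstacle is steps (ii)--(iv) in the well‑definedness of $g$, but these are of exactly the same nature as the well‑definedness argument for $f_*$ and as the proof of Claim~\ref{contiguous}, all powered by Claim~\ref{covers}(2).
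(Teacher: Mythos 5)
Your proof is correct and follows essentially the same route as the paper: subdivide a path in $Y$ into $\ca U$-small pieces, lift the vertices to the fibres, join the lifts by $\ca V$-small definable paths, and observe that the same pair of subdivisions witnesses both $f_*([a])=[b]$ and the inverse assignment, with well-definedness handled by the same patching mechanism used for $f_*$. The only cosmetic difference is that the paper fixes once and for all a right inverse $\psi$ of $f$ extending $(f\rest\Gamma)^{-1}$ to lift the vertices, whereas you lift intermediate vertices arbitrarily and verify independence of the choice.
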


\begin{proof} Choose a right inverse $\psi \colon Y \to X$ of $f\colon X\to Y$ extending $(f\rest \Gamma)^{-1}$.  
We define an inverse $\psi_*:\fund {Y} {f(\Gamma)} \to\dfund X \Gamma$ of $f_*$ as follows. Given $[b]\in\fund {Y} {f(\Gamma)}$, consider a subdivision $b=b_1+\cdots+b_m$ of
$b$ into $\mathcal{U}$-small paths. For each $i$, let $y_{i-1}$ and $y_i$
be the endpoints of $b_i$, 
and let $a_i$ be a $\mathcal{V}$-small path in
$X$ from $x_{i-1}=\psi(y_{i-1})$ to $x_i=\psi(y_i)$.
Finally define $\psi_*([b])=[a_1+\cdots+a_m]$.
Note that $\psi_*$ is well defined by the same argument that
proves that $f_*$ is well defined. We also claim that
$\psi_*=(f_* \rest {\dfund X \Gamma})^{-1}$. In fact, by inspection of the
definitions, the same pair of subdivisions $a=a_1+\cdots+a_m$ and
$b=b_1+\cdots+b_m$ witnesses both $f_*([a])=[b]$ and 
$\psi_*([b])=[a]$ simultaneously.  
\end{proof} 

The proof of Theorem \ref{top-lemma} is now complete. 
\end{proof}

\section{Equivariance} 

We now specialize the results of the previous section to the case when the spaces carry a group structure. So let $G$ be a definably compact group. Let $\tau^G\colon G \to \lie G$ be the natural map, and let $\tau^G_* \colon \dfund G G \to \fund {\lie G} {\lie G}$ be the induced morphism of groupoids as in Theorem \ref{top-lemma}. 

\bd The group 
$G$ acts on $\dfund G G$ by $x\cdot [a] = [x\cdot a]$, where $x\in G$ and $a$ is a definable path in $G$. Similarly we have an action of $\lie G$ on $\fund {\lie G}{\lie G}$ given by $y \cdot [b]:= [y\cdot b]$. Finally we have an action of $G$ on $\fund {\lie G} {\lie G}$ sending $(x, [b])$ to $\tau^G(x)\cdot [b] $, where $x \in G$ and $b$ is a path in $\lie G$. 
\ed

\bt \label{equivariant} $\tau^G_* \colon \dfund G G \to \fund {\lie G} {\lie G}$ is equivariant under the action of $G$, namely for each $x\in G$
and $[a]\in \dfund G G$, we have $\tau^G_* (x \cdot [a]) = \tau^G(x) \cdot \tau^G_*([a])$. \et

\begin{proof}
Let $x\in G$ and consider the map 
$\tau_x\colon\dfund G G \to \fund{\lie G}{\lie G}$ sending $[a]$ to $\tau^G(x^{-1}) \cdot \tau^G_*(x\cdot [a])$. 
It is easy to check that $\tau_x$ is a groupoid morphism satisfying the conditions (1) and (\ref{locality}) of Theorem~\ref{top-lemma}. Hence by the Theorem $\tau_x=\tau^G_*$ for all $x\in G$. It follows that $\tau^G_*$ is equivariant. \end{proof}

\section{Functors and natural transformations}

In this section we establish the funtoriality properties of the morphisms $\tau^G_*$. We can regard the correspondence $G\mapsto \dfund G G$ as the
object part of a functor $\pi^{\rm{def}}$ from the category of definably compact groups
(and definable group homomorphisms) to the category of groupoids (and groupoid homomorphisms).
Similarly we have a functor $\pi_1^{\rm{def}} \colon G \mapsto \dfundg G$ from definably compact groups to groups. Finally we have a functor $F\colon G \to \lie G$ from definably compact groups
to compact Lie groups (and Lie homomorphisms). By Theorem \ref{top-lemma}, 
for $G$ a definably compact group, the projection $\tau^G \colon G \to \lie G$ induces a morphism $$\tau^G_* \colon \dfund G G \to \fund {\lie G} {\lie G}.$$ 

\bt \label{comparison} The family $\tau_* = (\tau^G_*)_G$ is a natural transformation of the functor
$\pi^{\rm{def}} \colon G \mapsto \dfund G G$ to the functor
$\pi \circ F \colon G \mapsto \fund {\lie G}{\lie G}$.
In other words, given a definable morphism $f \colon G\to G'$ we have a commutative diagram in the category of groupoids: 
\[
\pushout{\dfund G G}{\dfund {G'} {G'}}{\fund {\lie G} {\lie G}}{\fund {\lie {G'}} {\lie {G'}}}{\pi^{\rm{def}}(f)}{\tau^G_*}{\tau^{G'}_*}{\pi(F(f))}
\]
where $F(f)\colon \lie G \to \lie {G'}$ is the induced Lie homomorphism. 
\et 

\bp
Consider an open cover ${\ca U}'$ of $\lie {G'}$ by simply connected sets. Now consider an open cover $\ca U$ of $\lie {G}$ by simply connected sets which refines $\{{F(f)}^{-1}(U')\mid U'\in {\ca U}' \}$. Using these covers in Definition \ref{defi}, the commutativity of the diagram follows immediately. 
\ep 

\br \label{isom} By \cite{HrPePi:08} $G^{00}$ is torsion free (see \cite{Be:07} for the non-abelian case), so if $\Gamma$ is a finite subgroup of $G$, then $\tau^G$ maps $\Gamma$ isomorphically onto its image in $\lie G$. It then follows by point (3) in Theorem \ref{top-lemma}, that the restriction of $\tau^G_*$ to $\dfund{G}{\Gamma}$ is an isomorphism onto $\fund{\lie G}{\tau^G(\Gamma)}$. 
\er

\section{Universal cover} 
Let $G$ be a definably compact definably connected definable group. 
The (o-minimal) universal cover $\uc G$ of a definable group $G$ has been studied in \cite{EdEl:07}. We 
can identify $\uc{G}$ with a subset of the groupoid $\dfund G G$, namely we let 
\begin{equation*} \uc G \subset \dfund G G \end{equation*}
be the subset consisting of all the definable homotopy classes of paths starting at $1\in G$. The group operation of $\uc{G}$ is defined by $[a]*[b] := [a]+[a(1)\cdot b]$ where $a(1)$ is the endpoint of the definable path $a$ and $``+"$ is the operation of the groupoid (induced by concatenation of paths).  There is a morphism of groups from $\uc{G}$ to $G$ sending $[a]$ to $a(1)$. We endow $\uc{G}$ with a group topology as follows: if $U$ is a definably simply connected open subset of $G$, then the set of all $[a]\in \uc{G}$ such that $\img(a)\subset U$ is a basic open neighbourhood of $1$ in $\uc{G}$. By left translation we obtain the basic open neighbourhoods around the other points. With this topology $\uc{G}$ is a locally definable space in the sense of \cite{BaOt:09b} (it can also be seen as an inductive limit of definable sets with the final topology), and the morphism $[a]\in \uc{G} \mapsto a(1)\in G$ is a local homeomorphism. With analogous definitions the universal cover of the real Lie group $\lie G$ can be identified with the subset 
\begin{equation*} 
\uc {\lie G} \subset \fund {\lie G}{\lie G}
\end{equation*}
consisting of all the homotopy classes of paths starting at $1\in \lie G$. 

\bd The morphism of groupoids $\tau^G_* \colon \dfund G G \to \fund {\lie G}{\lie G}$ given by Theorem \ref{top-lemma} induces a map $$\rho^G \colon \uc{G} \to \uc{\lie G}$$ 
by restriction, namely $\rho^G([a]) := \tau^G_*([a])$. 
\ed 

Since ${\tau^G_*}$ is equivariant (Theorem \ref{equivariant}), it follows at once that $\rho^G$ 
is a morphism of groups. Moreover we have: 

\bt \label{uc} Given a definably compact definably connected definable group $G$, the kernel of ${\rho^G} \colon \uc{G}\to \uc{\lie G}$ is isomorphic to $G^{00}$ via the map $[a] \mapsto a(1)$. \et 

\bp Let $[a]$ be in the kernel. Then $\tau^G_*([a]) = [b]$ where $b$ is a contractible loop at $1\in {\lie G}$. Since $\tau^G$ must send the endpoints of $a$ to the endpoints of $b$ (namely to the identity of ${\lie G}$), it follows that $a(1)\in G^{00}$. So we have a well defined map from $\ker (\rho^G)$ to $G^{00}$ which moreover is a group homomorphism. 
We must prove that it is an isomorphism. 

(Surjectivity) Given $x\in G^{00}$ we must find $[a]\in \ker (\rho^G)$ with $a(1)=x$. To this aim let $U$ be a a simply connected open neighbourhood of $1$ in $\lie G$. By Remark \ref{cor},  $V:= {\tau^G}^{-1}(U)$ is a ($\bigvee$-definable) definably simply connected subset of $G$. In particular $V$ is definably path connected, so there is a definable path $a$ in $U$ from $1 \in G$ to $x$. By Theorem \ref{top-lemma} there is a path $b$ in $U$ with $\tau^G_* ([a]) = [b]$. Since $x \in G^{00}$, the endpoint $\tau^G(x)$ of $b$ is the identity of $\lie G$, namely $b$ is a loop. Moreover since the image of $b$ is contained in the simply connected set $U$, $[b]$ is the identity of $\uc{\lie G}$, and therefore $[a]\in \ker (\rho^G)$. 

(Injectivity) Let $[a] \in \ker (\rho^G)$ and suppose $a(1) = 1_G$ (namely $a$ is a loop).  
By Theorem \ref{isom} $\tau^G_*$ sends $\dfundg G$ bijectively to $\fundg {\lie G}$. Since under this map $[a]$ goes to the identity, it follows that $a$ is definably contractible, namely $[a]$ is the identity of $\uc{G}$.  
\ep 

By the above theorem we have a commutative diagram 
\begin{equation*}
\pushout{\uc G}{\uc {\lie G}}{G}{\lie G}{}{}{}{}.
\end{equation*}
As a consequence we obtain similar diagrams for the finite extensions of $\lie G$:

\begin{proposition} Let $G$ be a definably compact definably connected definable group. Given an extension of Lie groups $f\colon \mathbf H \to \lie G$ with a finite kernel, there is a  definable group extension $\pi \colon H \to G$ of $G$ such that $\lie H \cong \mathbf H$ (as coverings of $\lie G$). We thus obtain a commutative diagram
\begin{equation*}
\pushout{H}{\mathbf H }{G}{\lie G}{\varphi}{\pi}{f}{\tau}
\end{equation*}
where $\varphi \colon H \to \mathbf H$ is the composition of the projection $H\to \lie H$ with the isomorphism $\lie H \cong \mathbf H$. 
\end{proposition}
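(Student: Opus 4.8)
The plan is to build $H$ as the definable finite cover of $G$ which, under the isomorphism $\dfundg G\cong\fundg{\lie G}$ coming from Theorem~\ref{top-lemma}, matches the given finite cover $\mathbf H$ of $\lie G$, and then to identify $\lie H$ with $\mathbf H$ using the naturality of $\tau_*$ (Theorem~\ref{comparison}) together with Remark~\ref{cor}. First I would translate the data into the language of fundamental groups. Since $f\colon\mathbf H\to\lie G$ is a surjective homomorphism of connected compact Lie groups with finite kernel it is a covering, hence corresponds to a subgroup $N$ of $\fundg{\lie G}=\ker(\uc{\lie G}\to\lie G)$ of finite index $|\ker f|$. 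By Remark~\ref{cor}, $\tau^G_*$ restricts to an isomorphism $\dfundg G\cong\fundg{\lie G}$; let $M\le\dfundg G$ be the preimage of $N$, a finite-index subgroup. As $\dfundg G$ is finitely generated abelian, the o-minimal theory of covering groups (\cite{EdEl:07}) provides a definable, definably compact, definably connected group $H$ with a finite covering homomorphism $\pi\colon H\to G$ realizing $M$ as deck group; thus $\uc G\to H$ is the universal cover of $H$, $\dfundg H$ is identified with $M\subseteq\dfundg G$ (the map induced by $\pi$ being the inclusion), and $\ker\pi\cong\dfundg G/M\cong\fundg{\lie G}/N\cong\ker f$.

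Next I would analyze the induced map on associated Lie groups. By functoriality of $G\mapsto\lie G$, $\pi$ induces a surjective Lie homomorphism $\bar\pi\colon\lie H\to\lie G$, fitting in a commutative square with the projections $\tau^H\colon H\to\lie H$ and $\tau^G\colon G\to\lie G$. A short computation shows $\pi(H^{00})=G^{00}$: the image is type-definable of bounded index in $G$, hence contains $G^{00}$, and it is contained in $G^{00}$ since $\pi^{-1}(G^{00})$, being type-definable of bounded index in $H$, contains $H^{00}$. Consequently $\pi^{-1}(G^{00})=H^{00}\cdot\ker\pi$, so $\ker\bar\pi=\pi^{-1}(G^{00})/H^{00}$ is a quotient of the finite group $\ker\pi$, hence finite. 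Thus $\bar\pi$ is a finite covering of connected Lie groups, and it corresponds to the subgroup $\bar\pi_*(\fundg{\lie H})$ of $\fundg{\lie G}$ (here $\bar\pi_*$ is injective).

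The core step is to prove $\bar\pi_*(\fundg{\lie H})=N$; by the classification of finite covers of the connected Lie group $\lie G$ this yields an isomorphism $g\colon\lie H\to\mathbf H$ over $\lie G$, i.e.\ $f\circ g=\bar\pi$. I would obtain it by applying the naturality of $\tau_*$ (Theorem~\ref{comparison}) to $\pi\colon H\to G$ and restricting to fundamental groups at the identity: this gives a commutative square whose top arrow is the inclusion $M=\dfundg H\hookrightarrow\dfundg G$, whose bottom arrow is $\bar\pi_*\colon\fundg{\lie H}\to\fundg{\lie G}$, and whose two vertical arrows $\tau^H_*\colon\dfundg H\to\fundg{\lie H}$ and $\tau^G_*\colon\dfundg G\to\fundg{\lie G}$ are isomorphisms by Remark~\ref{cor}. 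Chasing the square, $\bar\pi_*(\fundg{\lie H})=\bar\pi_*(\tau^H_*(\dfundg H))=\tau^G_*(M)=N$, the last equality being the definition of $M$. Setting $\varphi:=g\circ\tau^H$ (the projection $H\to\lie H$ composed with the isomorphism $\lie H\cong\mathbf H$), we get $f\circ\varphi=f\circ g\circ\tau^H=\bar\pi\circ\tau^H=\tau^G\circ\pi$, which is the asserted commutative square, and $\lie H\cong\mathbf H$ as coverings of $\lie G$.

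The step I expect to be the main obstacle is precisely pinning $\lie H$ down to be $\mathbf H$ — the equality $\bar\pi_*(\fundg{\lie H})=N$ — rather than just some finite cover of $G$: there is no obvious direct homomorphism $\uc G/M\to\uc{\lie G}/N$ available (a difficulty emphasized already for $\tau\circ\gamma$ in the introduction), so the argument must be routed through the naturality of $\tau_*$ and the local isomorphism of Remark~\ref{cor}. A secondary technical point requiring care is the appeal to \cite{EdEl:07} to know that the finite cover $\pi\colon H\to G$ built from $M$ is a genuine definable group which is moreover definably compact, and that $\dfundg H$ is correctly identified with $M$ with $\pi$ inducing the inclusion on the definable fundamental groups.
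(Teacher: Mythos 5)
Your construction of $H$ is the same as the paper's (the quotient of $\uc G$ by the subgroup $M=L'$ of $\dfundg G$ corresponding, via $\tau^G_*$, to the subgroup $N$ of $\fundg{\lie G}$ determined by $f$), but the identification of $\lie H$ with $\mathbf H$ is routed differently. The paper does not pass through $F(\pi)$ and covering-space classification at all: it observes that $\rho^G\colon\uc G\to\uc{\lie G}$ (from Theorem \ref{uc}) carries $L'$ onto $L$, so it descends to an explicit morphism $\varphi\colon H=\uc G/L'\to\mathbf H=\uc{\lie G}/L$, and then checks that $\ker\varphi$ is the image of $\uc G^{00}=\ker\rho^G$ in $H$, which coincides with $H^{00}$; this gives $\lie H\cong\mathbf H$ and the commutative square in one stroke. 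You instead take the induced Lie map $\bar\pi=F(\pi)$, prove $\pi(H^{00})=G^{00}$ (a clean and correct bounded-index argument) to see that $\bar\pi$ is a finite covering, and then chase the naturality square of Theorem \ref{comparison} to get $\bar\pi_*\fundg{\lie H}=\tau^G_*(M)=N$, invoking the classification of pointed covers of $\lie G$ (plus the standard fact that a pointed covering equivalence between covering groups is a group isomorphism) to conclude $\lie H\cong\mathbf H$ over $\lie G$. Both arguments work; the paper's buys the explicit $\varphi$ and the identification $\ker\varphi=H^{00}$ directly from the already-established exact sequence $1\to G^{00}\to\uc G\to\uc{\lie G}\to1$, while yours trades that for the $\pi(H^{00})=G^{00}$ computation and classical covering theory, which is arguably more transparent but needs $H$ definably compact and definably connected for Theorem \ref{comparison} and Remark \ref{cor} to apply.

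One point you correctly flag but leave to a citation is the existence, definability and definable compactness of the finite cover $\pi\colon H\to G$ with $\dfundg H$ identified with $M$ and $\pi_*$ the inclusion. The paper does not outsource this to \cite{EdEl:07}: it proves it directly, by covering $G$ with finitely many definably simply connected definable sets $U$ and using the homeomorphisms $U\times\Gamma\to\pi^{-1}(U)$ to interpret $H$ and $\pi$ in the o-minimal structure. Since it is not clear that \cite{EdEl:07} literally contains the statement you need (definability of intermediate finite covers together with the identification of their definable fundamental groups), you should either supply this short gluing argument as the paper does or locate a precise reference; as it stands this is the only step of your outline resting on an unverified citation rather than on the tools developed in the paper.
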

\bp Note that with our definitions $\dfundg {\lie G}$ is a discrete central subgroup of $\uc{\lie G}$.
Let $L \lhd \uc{\lie G}$ be the image of $\pi_1(f)\colon \fundg {\mathbf H} \to \fundg {\lie G}$. 
We can identify $\mathbf H$ with $\uc{\lie G}/L$ . Let $L'$ be the preimage of $L$ under the isomorphism $\dfundg G \to \fundg {\lie G}$ of Theorem \ref{top-lemma}.  Define $H = \uc{G}/L'$. The projection $\uc{G} \to G$ induces, passing to the quotient, a morphism $\pi \colon H\to G$ whose kernel $\Gamma= \dfundg G /L'$ is isomorphic to the kernel $\fundg {\lie G}/L$ of $f\colon \mathbf H \to {\lie G}$. 
Given a definable simply connected subset $U$ of $G$ there is a homeomorphism
$U \times \Gamma \to \pi^{-1}(U)$ where $H = \uc{G}/L'$ has the quotient topology.  
Since $G$ can be covered by finitely many definably simply connected subsets, this gives us an interpretation of $\pi \colon H \to G$ in the underlying o-minimal structure. In other words, up to isomorphism, $H$ and $\pi$ are definable. 
Now let $\uc{G}^{00}$ be the kernel of the morphism $\rho^G \colon \uc{G} \to \uc{\lie G}$ of Theorem \ref{uc}. Since $\rho^G(L')=L$, $\rho^G$ induces a morphism $\varphi \colon H \to \mathbf H$ whose kernel is easily verified to be the image of $\uc{G}^{00}$ under the quotient map $\uc{G}\to H$. Under the above interpretation of $H$, this kernel coincides with $H^{00}$. 
\ep 

\section{Definably compact semisimple groups}

Let $G$ be a definably compact definably connected semisimple group. 
In this section we show that the (Lie-)isomophism type of $\lie G$ detemines the definable isomorphism type of $G$.  

\begin{lemma}\label{deflift}
Work in an o-minimal expansion of $\R$. 
Let $f:X\to B$ be a definable continuous map. 
Let $p:E\to B$ be a definable covering map. 
Let $\w{f}:X\to E$ be a lifting of $f$ (i.e. a
continuous function, not necessarily definable, such that
$p\circ\w{f}=f$). Then $\w{f}$ is definable. 
\end{lemma}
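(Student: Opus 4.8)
The plan is to reduce the statement to a purely local fact: definability is a local property for continuous functions in o-minimal structures, so it suffices to show that $\w{f}$ agrees locally with a definable function. I would begin by covering $B$ by finitely many definable open sets $U_1,\dots,U_k$ over which the covering $p\colon E\to B$ is definably trivial, i.e.\ $p^{-1}(U_j)$ is definably homeomorphic over $U_j$ to $U_j\times F_j$ with $F_j$ a finite discrete set (such a finite cover exists because $p$ is a definable covering map and $B$, being definable, is covered by finitely many definable open trivializing sets; here the hypothesis that we work over $\R$ is used, e.g.\ to guarantee the relevant o-minimal topological facts about definable covers hold, and to identify the definable covering with a genuine topological covering). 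Then set $X_j := f^{-1}(U_j)$, a definable open subset of $X$, and note that $X = \bigcup_j X_j$.

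Next I would work inside a single trivializing chart. Fix $j$ and identify $p^{-1}(U_j)$ with $U_j\times F_j$ via a definable homeomorphism; under this identification $p$ becomes the first projection. The restriction $\w{f}\rest{X_j}\colon X_j \to U_j\times F_j$ has the form $x \mapsto (f(x), \sigma(x))$ for some function $\sigma\colon X_j \to F_j$, and continuity of $\w{f}$ together with discreteness of $F_j$ forces $\sigma$ to be locally constant, hence constant on each definably connected component of $X_j$. By the o-minimal cell decomposition theorem $X_j$ has only finitely many definably connected components, each of which is definable; since $\sigma$ takes a single value on each such component, $\sigma\rest{X_j}$ is a definable function. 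Therefore $\w{f}\rest{X_j}$, being the pair $(f\rest{X_j}, \sigma\rest{X_j})$ composed with a definable homeomorphism, is definable.

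Finally I would glue: $\w{f}$ restricted to each member of the finite definable open cover $\{X_j\}$ is definable, and the graph of $\w{f}$ is the union of the (finitely many) definable graphs of the $\w{f}\rest{X_j}$, hence definable; so $\w{f}$ itself is definable. The main obstacle, and the only point requiring care, is the very first step — producing a \emph{finite} definable open cover of $B$ over which $p$ is definably trivial, and verifying that the transition between the topological covering structure of $p$ and its definable structure is benign. This is where working over $\R$ matters: one wants definable local triviality of definable covering maps, which is available in the o-minimal-over-$\R$ setting; the rest of the argument is the standard "locally constant $\Rightarrow$ constant on finitely many components $\Rightarrow$ definable" routine.
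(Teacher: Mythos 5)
Your proposal is correct and follows essentially the same route as the paper: a finite definable trivializing cover of $B$, the observation that over $\R$ definably connected components are connected so the choice of sheet is constant on each of the finitely many definable components of $f^{-1}(U)$, and gluing the finitely many definable restrictions. The ``obstacle'' you flag at the end is not one: the finite cover of $B$ by definable open sets over which $p^{-1}(U)$ splits into finitely many definable sheets homeomorphic to $U$ is exactly what the paper takes as the definition of a definable covering map.
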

\begin{proof}
By definition of definable covering, we have a definable finite cover $\mathcal{U}$ of $B$ by definably connected definable open sets
such that, for any $U\in\mathcal{U}$, the inverse image $p^{-1}(U)$ is a
finite disjoint union of definably connected open sets on each of which $p$ is an
homeomorphism onto $U$.  Fix $U\in\mathcal{U}$ and let $E_1,\cdots,E_m$
be the definably connected components of $p^{-1}(U)$ and $X_1,\cdots,X_n$ be the
definably connected components of $f^{-1}(U)$. Note that all these sets are definable. 
Fix an $i\in\{1,\cdots,n\}$. Since we are working over $\R$, a definably connected set is connected. So, by
continuity of $f$, there is a $j\in\{1,\cdots,m\}$ such that
$\w{f}(X_i)\subset E_j$. Hence $(\w{f} \rest {X_i})(x)=y$ if and only if
$x\in X_i \wedge y\in E_j \wedge f(x)=p(y)$. This proves that
$\w{f} \rest {X_i}$ is definable, and the definability of $\w{f}$
follows observing that the same hold for any $U\in\mathcal{U}$ and any
$i$.
\end{proof}

\begin{fact}\label{semisimple-semialgebraic}(\cite[theorem 3.1]{EdJoPe:07} or 
\cite[theorem 4.4 (ii)]{HrPePi:08b}).
For any semisimple definable group $G$, there is a group $G'$,
semialgebraic without parameters, definably isomorphic to it.
\end{fact}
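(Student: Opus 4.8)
(The statement is cited above from \cite{EdJoPe:07} and \cite[Theorem~4.4(ii)]{HrPePi:08b}; here is the plan one would follow to prove it.) The idea is to reduce to the definably simple case, use the known classification there, descend the ambient real closed field to the real algebraic numbers, and finally reassemble $G$ from its almost-simple factors and its finite centre. Note first that ``semialgebraic without parameters'' coincides with ``semialgebraic over $\R^{\mathrm{alg}}$'': the field $\R^{\mathrm{alg}}$ is the prime model of the theory of real closed fields, it embeds in the real closed field underlying $M$, and each of its elements is $\emptyset$-definable. So it suffices to produce $G'$ semialgebraic over $\R^{\mathrm{alg}}$.

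First I would treat the case in which $G$ is definably simple and non-abelian. By \cite{PePiSt:00}, $G$ is definably isomorphic to (the $M$-points of) a simple algebraic group defined over $M$; one concrete way to see this is to take $\Ad(G)$, the semialgebraic identity component of $\Aut(L(G))$ acting on the definable semisimple Lie algebra $L(G)$, which is legitimate because $\ker\Ad=Z(G)$ is finite. The decisive point is then that the isomorphism type of such a group is captured by finite combinatorial data (the root datum of the complexification together with the real form); there are only finitely many possibilities of each dimension, say $\mathbb H_1,\dots,\mathbb H_r$, each of which may be taken defined over $\R^{\mathrm{alg}}$ (over $\Q$ for the split form, with a cocycle for the real form). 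Consequently the statement ``every simple semialgebraic group of dimension $\dim G$ is definably isomorphic to some $\mathbb H_i$'' is first-order; being true in $\R^{\mathrm{alg}}$ it holds in $M$ as well, by completeness of the theory of real closed fields. So $G$ is definably isomorphic to the $\emptyset$-definable semialgebraic group $\mathbb H_i(M)$ for some $i$.

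Next I would handle a general semisimple (definably connected) $G$. Its minimal infinite normal definable subgroups are finitely many definably simple groups $G_1,\dots,G_k$, and $G$ is a central extension, by a finite subgroup, of $\ov G:=G_1/Z(G_1)\times\cdots\times G_k/Z(G_k)$, the extension being classified by a subgroup $\Lambda$ of the finite group $\pi_1(\ov G)$. By the simple case each $G_i/Z(G_i)$, hence $\ov G$, is definably isomorphic to an $\emptyset$-definable semialgebraic group, and so is its semialgebraic universal cover $\uc{\ov G}$ (the simply connected form), inside which $\Lambda$ is an $\emptyset$-definable finite central subgroup. Then $G$ is definably isomorphic to the quotient $\uc{\ov G}/\Lambda$, which is again $\emptyset$-definable and semialgebraic, and this is the desired $G'$. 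The covering and quotient bookkeeping here is of the same kind as the finite-cover gluing over finitely many definably simply connected open sets already performed for $\lie G$ earlier in the paper.

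The hardest step is the descent in the simple case: transporting the isomorphism type of $G$ --- equivalently, of the Lie algebra $L(G)$ --- down to $\R^{\mathrm{alg}}$ while keeping everything parameter-free. This relies on two external inputs, namely the classification of real forms of simple Lie algebras (their finiteness and the existence of $\Q$- or $\R^{\mathrm{alg}}$-forms) and the completeness of the theory of real closed fields, on top of the \cite{PePiSt:00} reduction to the linear semialgebraic case. Once that is granted, reassembling $G$ from the $G_i/Z(G_i)$ and the finite centre is essentially routine, though one must track $\pi_1(\ov G)$ and the subgroup $\Lambda$ carefully.
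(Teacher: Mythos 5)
This statement is quoted as a Fact with citations (\cite{EdJoPe:07}, \cite[Theorem 4.4(ii)]{HrPePi:08b}); the paper itself contains no proof, so your sketch can only be measured against the literature it points to. For the definably simple (centerless) case your plan follows the standard route (reduction via \cite{PePiSt:00} and the adjoint representation, then descent to $\R^{\rm{alg}}$ using the finiteness of real forms), although the transfer step is phrased too loosely: ``every simple semialgebraic group of dimension $\dim G$ is definably isomorphic to some $\mathbb H_i$'' is not a first-order sentence, since it quantifies over all semialgebraic groups. The correct way to transfer is at the level of Lie algebras: the condition that a tuple of structure constants defines a Lie algebra isomorphic to a fixed one with constants in $\R^{\rm{alg}}$ is a semialgebraic condition over $\Q$, and then one applies completeness of RCF to the structure constants of $L(G)$. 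This is repairable, so it is a minor flaw.

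The genuine gap is in your reassembly step for general semisimple $G$. You claim that the extension $1\to Z(G)\to G\to \ov G\to 1$ is classified by a subgroup $\Lambda$ of ``the finite group $\pi_1(\ov G)$'' and that $G\cong \uc{\ov G}/\Lambda$ with $\uc{\ov G}$ the semialgebraic simply connected form. This is false outside the definably compact case: for $\ov G = \mathrm{PSL}_2$ the (o-minimal) fundamental group is $\Z$, not finite, and a definable connected $3$-fold central cover of $\mathrm{PSL}_2(M)$ exists (built semialgebraically from a cocycle, much like the finite extensions of $G$ constructed earlier in this paper) but is \emph{not} a quotient of the algebraically simply connected form $\mathrm{SL}_2$, which only provides the $2$-fold cover. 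So not every definable finite central extension of $\ov G$ arises as a quotient of a semialgebraic simply connected group, and your ``routine bookkeeping'' silently skips exactly the hard point: showing that the definable $2$-cocycle defining the extension $G$ of $\ov G$ by the finite group $Z(G)$ can be replaced, up to definable isomorphism, by a semialgebraic one over $\R^{\rm{alg}}$. That is precisely the content of the central-extension analysis in \cite{HrPePi:08b} (it is the ``central extensions'' of that paper's title), and without it your argument only covers the definably compact case, where $\pi_1^{\rm{def}}(\ov G)$ is indeed finite and the universal cover is definable.
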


\begin{lemma}\label{get-semialgebraic-isomorphism}
Let $G$ and $G'$ be definably connected semialgebraic semisimple groups defined over $\R$. By \cite{Pi:88} $G(\R)$ and $G'(\R)$ have a natural Lie group structure. 
Suppose that $f:G(\R) \to G'(\R)$ is a Lie isomorphism. Then $f$ is semialgebraic over $\R$. 
\end{lemma}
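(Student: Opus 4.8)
The plan is to reduce the statement to a known rigidity result for semisimple Lie groups together with the elimination-of-quantifiers properties of the real field. First I would pass to the universal covers: let $p\colon \w{G} \to G(\R)$ and $p'\colon \w{G'} \to G'(\R)$ be the (Lie-theoretic) universal covers. Since $G$ and $G'$ are semialgebraic and semisimple, the covering maps $p$ and $p'$ can be taken to be semialgebraic over $\R$ (the fundamental groups are finite, so $\w G$ and $\w{G'}$ are again semialgebraic groups, cf. the discussion preceding the lemma and the construction in the previous section). The Lie isomorphism $f\colon G(\R)\to G'(\R)$ lifts to a Lie isomorphism $\w f\colon \w G \to \w{G'}$ of the universal covers, and by Lemma~\ref{deflift} applied to the composite $f\circ p\colon \w G \to G'(\R)$ and the covering $p'$, this lift $\w f$ is semialgebraic. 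Since $p$ is a definable covering and $\w f$ is semialgebraic, $f$ itself (being $p'\circ\w f$ composed with a section of $p$ over each chart) is semialgebraic; so it suffices to prove the lemma for the universal covers, i.e.\ we may assume $G(\R)$ and $G'(\R)$ are connected and simply connected semisimple Lie groups.

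In the simply connected case the derivative $df_e\colon \mathfrak g \to \mathfrak g'$ at the identity is a Lie algebra isomorphism, and $f$ is the unique Lie homomorphism integrating $df_e$. The key point is now to recognize $df_e$ as a semialgebraic object: the Lie algebras $\mathfrak g = \mathrm{Lie}(G)$ and $\mathfrak g' = \mathrm{Lie}(G')$ are semialgebraically identified with the tangent spaces at the identity, and for connected semisimple $G$ the exponential map, although transcendental, has the property that $G(\R)$ is generated by any neighborhood of the identity; more usefully, $f$ is determined by its restriction to such a neighborhood. The cleanest route is: an abstract group isomorphism between the $\R$-points of two semialgebraic semisimple groups over $\R$ is automatically semialgebraic, because semisimple algebraic groups are (essentially) determined by their abstract groups and the relevant functoriality can be made semialgebraic — but to keep things self-contained within the o-minimal framework I would instead argue as follows. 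By Fact~\ref{semisimple-semialgebraic}-type rigidity, $f$ coincides on a neighborhood of $e$ with a semialgebraic map: indeed, the graph of $f$ is a subgroup of $G(\R)\times G'(\R)$ whose projection to the first factor is an isomorphism; this graph is a Lie subgroup, and by the adjoint representation it equals the graph of the unique algebraic-group morphism induced by the algebra isomorphism $df_e$, which is defined over $\R$ since $\mathfrak g,\mathfrak g'$ and the bracket are.

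So the concrete steps in order are: (i) reduce to simply connected $G(\R), G'(\R)$ using the lift and Lemma~\ref{deflift}; (ii) form the graph $\Gamma_f \subseteq G(\R)\times G'(\R)$ and observe it is a connected closed Lie subgroup projecting isomorphically onto the first factor; (iii) identify $\mathrm{Lie}(\Gamma_f)$ inside $\mathfrak g \oplus \mathfrak g'$ as the graph of the Lie algebra isomorphism $\varphi := df_e$, which is a semialgebraic (indeed $\R$-rational) linear subspace because it is determined by the semialgebraic data of the two bracket structures plus the condition of being a subalgebra projecting bijectively; (iv) since in a semisimple algebraic group a connected subgroup is determined by its Lie algebra and this correspondence is semialgebraic, conclude that $\Gamma_f$ is semialgebraic over $\R$, hence $f$, being the composite of $(\mathrm{pr}_1\rest{\Gamma_f})^{-1}$ with $\mathrm{pr}_2$, is semialgebraic over $\R$. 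The main obstacle is step~(iii)--(iv): making precise, and justified within the semialgebraic category over $\R$, the claim that the Lie-algebra-to-subgroup correspondence for semisimple groups is semialgebraic — this is where one really uses that the groups are semialgebraic (so that the adjoint representation and its image are semialgebraic, and a connected semisimple subgroup is the semialgebraic connected component of the stabilizer of its Lie subalgebra under $\mathrm{Ad}$). Everything else — the lifting, the use of Lemma~\ref{deflift}, the uniqueness of integrating homomorphisms — is routine.
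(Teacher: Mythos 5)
Your core idea---show that the graph $\Gamma_f\subset G(\R)\times G'(\R)$ is semialgebraic by identifying it with the identity component of the $\Ad$-stabilizer of the graph of $df_e$---can be made to work, and it is close in spirit to the paper's proof (both arguments ultimately reduce the semialgebraicity of $f$ to that of the adjoint representation). But as written there are two genuine gaps. First, your step (i) is both broken and unnecessary. The lemma does not assume definable compactness, and for a noncompact semisimple group such as $\mathrm{SL}_2$ the group $\pi_1(G(\R))$ is infinite and the universal cover is not a semialgebraic (or even linear) group, so the claim that $\w G$ and $\w{G'}$ ``are again semialgebraic groups'' fails. Moreover your appeal to Lemma \ref{deflift} there is circular: that lemma requires the map being lifted---here $f\circ p$---to be \emph{definable}, which is essentially the conclusion you are after. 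The paper reduces in the opposite direction: it passes \emph{down} to the centerless quotients $G/Z(G)$ and $G'/Z(G')$ (the centers are finite since the groups are semisimple), where $\Ad$ is an isomorphism onto $\Aut^0(\mathfrak g)$ and hence the induced isomorphism is a composition of semialgebraic maps; only then is Lemma \ref{deflift} invoked, and legitimately so, because at that point the map $G(\R)\to (G'/Z(G'))(\R)$ being lifted along the finite covering $G'(\R)\to (G'/Z(G'))(\R)$ is already known to be semialgebraic.

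Second, the step you yourself flag as ``the main obstacle,'' (iii)--(iv), is left unproved, and the general principle you invoke is false: a connected subgroup of a semisimple group is in general \emph{not} the identity component of the $\Ad$-stabilizer of its Lie algebra (take $\mathfrak h$ a simple ideal of $\mathfrak g$: the stabilizer is all of the group). What saves your argument is special to your situation: if $\mathfrak h=\{(x,\varphi(x)):x\in\mathfrak g\}$ is the graph of an isomorphism $\varphi=df_e$ of centerless Lie algebras, then $(a,b)$ normalizes $\mathfrak h$ iff $[b,\varphi(x)]=\varphi([a,x])=[\varphi(a),\varphi(x)]$ for all $x$, i.e.\ iff $b-\varphi(a)$ centralizes $\mathfrak g'$, i.e.\ iff $b=\varphi(a)$; hence $\mathfrak n_{\mathfrak g\oplus\mathfrak g'}(\mathfrak h)=\mathfrak h$, so $\Gamma_f$, being a closed connected subgroup of $N=\{g\in G(\R)\times G'(\R):\Ad(g)\mathfrak h=\mathfrak h\}$ with the same Lie algebra as $N^0$, equals $N^0$, which is semialgebraic over $\R$ because $\Ad$ is, and then $f=\mathrm{pr}_2\circ(\mathrm{pr}_1\rest{\Gamma_f})^{-1}$ is semialgebraic. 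Note that this computation uses no simple connectedness whatsoever, so once it is inserted your steps (ii)--(iv) prove the lemma for all definably connected semisimple $G$, $G'$ directly, and step (i) should simply be deleted.
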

\begin{proof}
We first prove the result under the additional assumption that $G$ and $G'$ are centerless. 
The isomorphism $f\colon G(\R)\to G'(\R)$ induces an isomorphism $\phi:\mathfrak{g}\to\mathfrak{g}'$ of the corresponding Lie algebras. Since we are in the centerless case, the adjoint representation $\Ad_G:G(\R) \to\Aut(\mathfrak{g})$ is an isomorphism onto $\Aut^0(\mathfrak{g})$ and similarly for $G'(\R)$. Fixing a basis of the vector spaces $\mathfrak g$ and $\mathfrak g'$, we can consider $\Ad_G$ and $\Ad_G'$ as semialgebraic maps. Let $\w \phi \colon \Aut^0(\mathfrak g) \to \Aut^0 ({\mathfrak g}')$ be the isomorphism induced by $\phi$. Then $f=\Ad\circ\w\phi\circ\Ad'^{-1}$ and therefore $f$ is semialgebraic over $\R$.  

To reduce the general case to the centerless case we use the fact that $G/Z(G)$ and $G'/Z(G')$ are centerless. Clearly $f$ induces an isomorphism $g \colon G/Z(G) \to G'/Z(G')$. By the centerless case $g$ is semialgebraic. By Lemma \ref{deflift}, $f$ is itself semialgebraic.  
\end{proof}

\br In the above Lemma we cannot ensure that $f$ is semialgebraic over $\R^{\rm{alg}}$ even assuming that $G$ and $G'$ are semialgebraic over $\R^{\rm{alg}}$. In fact let $G = G' = SO(3,\R)$. The group of inner automorphisms of $SO(3,\R)$ is non-trivial and connected, so it has the cardinality of the continuum. Therefore there is some inner automorphism $f\colon SO(3,\R) \to SO(3,\R)$ which is not definable over $\R^{\rm{alg}}$. \er

\bt\label{semisimple-case}
Let $G$ and $G'$ be definably compact definably connected 
semisimple definable groups.
Suppose that there is a Lie isomorphism
$\psi \colon\lie G \to \lie {G'}$.
Then there is a definable isomorphism $f \colon G\to G'$. 
If the o-minimal structure is sufficiently saturated we can consider the projection $\tau^G\colon G\to\lie G$ and $\tau^{G'}\colon G'\to\lie{G'}$ and we can choose $f$ so that 
$\tau^{G'}\circ f  = \psi \circ\tau^G$. 
\et
\begin{proof} 
By fact \ref{semisimple-semialgebraic} we may assume $G$ and $G'$ to be
semialgebraic without parameters. So it makes sense to consider the groups $G(\R)$ and $G'(\R)$. If $M$ is sufficiently saturated there is an elementary embedding of $\R$ into $M$ (in the language of fields) and there is a surjective homomorphism $G(M)\to G(\R)$ (given by the ``standard part map'') whose kernel is $G^{00} = G^{00}(M)$ (\cite{Pi:04}). Similarly for $G'$. So $\lie{G} \cong G(\mathbb{R})$ and $\lie{G'} \cong G'(\mathbb{R})$ (with the logic topology). Hence
we have a Lie isomorphism $\psi' \colon G(\mathbb{R}) \to G'(\mathbb{R})$ induced by $\psi$. 
By lemma \ref{get-semialgebraic-isomorphism} $\psi'$ is semialgebraic over $\R$. 
The same formula defines an isomorphism $f \colon G(M)\to G'(M)$ with $\tau^{G'}\circ f  = \psi \circ\tau^G$. If $M$ is not sufficiently saturated, then we can go to a saturated extension $M'$ to get an $M'$-definable isomorphism $f \colon G(M')\to G'(M')$ as above, and therefore (quantifying over the parameters) also an $M$-definable isomorphism from $G(M)$ to $G'(M)$.  
\end{proof}

\section{Definably compact abelian groups} 

In this section we try to understand, in the abelian case, up to which extent $\lie G$ determines $G$.
It is known that there are definably compact definably connected abelian groups $G$ and $G'$ of the same dimension (hence with $\lie G \cong \lie {G'}$) which are not definably isomorphic (see \cite{St:94,PeSt:99}). Indeed it may happen that $G$ splits as a product of $1$-dimensional definable subgroups, while $G'$ has no definable $1$-dimensional subgroups  
However by \cite{BeMaOt:09} any two definably compact definably connected abelian groups of the same dimension are definably homotopy equivalent. The same proof yields the following: 

\bl \label{abelian-homotopy} (See \cite[Theorem 3.4]{BeMaOt:09}) Let $G,G'$ be definably compact definably connected abelian groups of the same dimension $n$. (So $\fundg G \cong \fundg {G'} \cong \Z^n$ by \cite{EdOt:04}.) Let $\theta \colon \fundg G \to \fundg {G'}$ be an isomorphism. Then there is a definable continuous map $f\colon G\to G'$ with $\pi_1(f) = \theta$ and $f(e) = e$. Moreover, any such map $f$ is a definable homotopy equivalence.  
\el 

\bp 
Special case. Suppose that $G$ is the direct product of $1$-dimensional definable subgroups. 
Choose free generators $[a_1], \ldots, [a_n]$ of $\dfundg G$ such that each $x\in G$ can be written uniquely in the form $x= a_1(t_1)+\ldots + a_n(t_n)$ with $0\leq t_i < 1$. 
Let $[b_1], \ldots, [b_n]\in \dfundg {G'}$ be the images of $[a_1], \ldots, [a_n]$ under $\theta$. Define $f(x) = b_1(t_1)+\ldots + b_n(t_n)$. Then clearly $\pi_1^{\rm{def}}(f) = \theta$. Since the higher definable homotopy groups of $G$ and $G'$ are zero by \cite{BeMaOt:09}, $f$ is a definable homotopy equivalence by the o-minimal version of Whitehead theorem in \cite{BaOt:09}. 

General case. We reduce to the special case as follows. 
Let $T$ be a definably compact definably connected one-dimensional abelian group, and let $T^n$ be the direct product of $n$-copies of $T$, where $n= \dim G$. By \cite{EdOt:04}, $\dfundg {G} \cong \fundg {T^n} \cong \Z^n$. Choose an isomorphism $\lambda\colon \dfundg {T^n} \to \dfundg G$. Then $\theta \circ \lambda \colon \dfundg {T^n} \to \dfundg {G'}$ is an isomorphism. By the special case we get definable homotopy equivalences $g,h$ with $\pi_1(g) = \lambda$ and $\pi_1(h) = \theta \circ \lambda$. Let $g'$ be a definable homotopy inverse of $g$. So $f := h \circ g'$ satisfies $\pi_1(f) = \theta$. 
\ep 

To improve on the above result we need a definition. 

\bd \label{equiva-riant-lence} Let $G$ and $G'$ be definable groups with  subgroups $\Gamma < G$ and $\Gamma' < G'$.
We say that a definable continuous map $f\colon G \to G'$ is a {\bf $\Gamma$-equivariant definable homotopy equivalence} if $f\rest \Gamma$ is an isomorphism onto $\Gamma'$ and $f$ admits a definable homotopy inverse $f'$ such that the following holds: 
\begin{itemize}
\item $f(1)=1$ and $f(cx)=f(c)f(x)$ for any $c\in\Gamma$ and $x\in G$;
\item $f'(1)=1$ and $f'(c'x')=f'(c')f'(x')$ for any $c'\in\Gamma'$ and any $x'\in G'$;
\item there is a definable homotopy $h:I\times G\to G$ relative to $\Gamma$
between $f'\circ f$ and the
identity on $G$ such that $h_t(cx)=ch_t(x)$ for any $c\in\Gamma$, 
$x\in G$, and $t\in I$;
\item there is a definable homotopy $h':I\times G'\to G'$ relative to $\Gamma'$
between $f\circ f'$ and the
identity on $G'$ such that $h'_t(c'x')=c'h'_t(x')$ for any
$c'\in\Gamma'$, $x'\in G'$, and $t\in I$.
\end{itemize}
Note that $f'\rest {\Gamma'}$ is the inverse of $f\rest \Gamma$. 
\ed 

To prove the existence of $\Gamma$-equivariant homotopy equivalences we need some preliminary results. The following Lemma says that given a definable covering map $p\colon E \to B$ we can always lift a definable continuous function $f\colon X \to B$ to a function $\w f \colon X \to E$ provided there are no obstructions coming from the fundamental group. 

\bl \label{lifting-exists}
Let $p:E\to B$ be a definable covering map, with $B$ definably connected.
And let $f:X\to B$ be a definable continuous map from a definable
definably connected set $X$ to $B$.  Fix base points $e_0 \in E$, $b_0\in B$ and $x_0 \in X$
with $f(x_0)=p(e_0)=b_0$. Consider the homomorphisms $\pi_1(p)$ and $\pi_1(f)$ induced by $p$ and $f$ on the
definable fundamental groups. If
$\img\pi_1^{\rm{def}}(f)\subset\img\pi_1^{\rm{def}}(p)$ then there
is a unique definable continuous function $\w{f}:X\to E$ lifting  $f$ (i.e. such
that $p\circ\w{f}=f$) with $\w{f}(x_0)=e_0$.
\el 
\begin{proof}
The proof of the corresponding classical result (see \cite[Theorem 2.4.5]{Sp:66}) can be adapted to the o-minimal category thanks to the definable version of the homotopy lifting property in \cite{BaOt:09}. More precisely, for each $x \in X$ choose, uniformly in $x$, a definable path $a_x$ from $x_0$ to $x$ in $X$. Then $b_x:= f \circ a_x$ is a definable path in $B$. Let $\w {b_x}$ be its (unique) lifting to a definable path in $E$ with starting point $e_0$. Define $\w f (x)$ as the final point of $\w {b_x}$. This is independent on the choice of the paths and works. 
\end{proof}

\bt \label{abelian-case}
Let $G, G'$ be definably compact definably connected abelian groups. Let 
\[\psi \colon \lie G \to \lie {G'}\] be an isomorphism of Lie groups.  Let
$\Gamma$ be a finite subgroup of $G$. Let $\Gamma'$ be the unique finite sugroup of $G'$ such that $\Gamma' {G'}^{00}=\psi(\Gamma G^{00})$. Then there is a $\Gamma$-equivariant definable homotopy equivalence $f^G \colon G \to G'$ which coincides with $\psi$ on $\Gamma$ (more precisely for each $c\in \Gamma$ $f^G(c){G'}^{00} = \psi(cG^{00})$). 
\et 

\bp
To simplify notations let $\mathbf G := \lie G$ and $\mathbf {G'}:= \lie {G'}$. 
Since $\Gamma$ is finite the projection $G \to \mathbf G$ maps $\Gamma$ isomorphically onto its image ${\mathbf \Gamma} := \Gamma G^{00}/G^{00} < \mathbf G$. Let $\mathbf {\Gamma'} := \psi(\mathbf {\Gamma}) < \mathbf {G'}$ and let $\Gamma'$ be the unique finite sugroup of $G'$ which is mapped to $\mathbf \Gamma'$ under the projection $G' \to \mathbf {G'}$. 

Passing to the quotient, the isomorphism $\psi \colon \mathbf G \to \mathbf {G'}$ induces an isomorphism $\phi \colon \mathbf {G/\Gamma} \to \mathbf {G'/\Gamma'}$ making the following diagram commute (where the vertical arrows are the projections):
\begin{equation*}
\pushout{\mathbf G}{\mathbf {G'}}{\mathbf{G/\Gamma}}{\mathbf{G'/{\Gamma'}}}{\psi}{}{}{\phi}
\end{equation*}
Since $\mathbf \Gamma$ and $\mathbf {\Gamma'}$ are mapped to the identity of $\mathbf {G/\Gamma}$ and $\mathbf {G'/\Gamma'}$ respectively, we  obtain an induced commutative diagram in the category of groupoids: 
\begin{equation*}\tag{*} \label{4}
\pushout{\fund{\mathbf G}{\mathbf \Gamma}}{\fund{\mathbf G'}{\mathbf {\Gamma'}}}{\fund{\mathbf{G/\Gamma}}1}{\fund{\mathbf{G'/\Gamma'}} 1}{\pi(\psi)}{}{}{\pi(\phi)} 
\end{equation*}
where $\mathbf {G/\Gamma}$ can be naturally identified with $(G/\Gamma)/(G/\Gamma)^{00}$ and similarly on the $G'$ side. By Theorem \ref{comparison} and Remark \ref{isom} we have commutative diagrams 
\begin{equation*} 
\pushout {\dfund G \Gamma} {\dfund {G/\Gamma} 1 } {\fund {\mathbf G} {\mathbf \Gamma}} {\fund {\mathbf {G/\Gamma}} 1 } {}{\cong}{\cong}{} 
\mspace{70mu} 
\pushout {\dfund {G'} {\Gamma'}} {\dfund {G'/\Gamma'} 1 } {\fund {\mathbf G'} {\mathbf \Gamma'}} {\fund {\mathbf {G'/\Gamma'}} 1 } {}{\cong}{\cong}{} 
\end{equation*}  
where the vertical arrows are isomorphisms and the horizontal arrows are induced by the quotient maps. 
Composing with (\ref{4}) we obtain a commutative diagram in the category of groupoids
\begin{equation*}
\pushout{\dfund{G}{ \Gamma}}{\dfund{G'}{ \Gamma' }}{\dfund{{G/\Gamma}}1}{\dfund{G'/\Gamma'} 1}{\theta}{}{}{\lambda}
\end{equation*}
Moreover, since all the diagrams were induced by the isomorphism $\psi$, it is easy to verify for each $c \in \Gamma$ and $[a] \in  \fund{G}{\Gamma}$ we have $\theta (c \cdot [a]) = \ov{\psi}(c) \cdot \theta ([a])$, where $\ov \psi \colon \Gamma \to \Gamma'$ is defined by $\ov{\psi}(c)G^{00}= \psi(cG^{00})$. 

By Lemma \ref{abelian-homotopy} there is a definable homotopy equivalence $f\colon G/\Gamma \to G'/{\Gamma'}$ with $\pi_1(f) = \lambda$ and $f(1) = 1$. By Lemma \ref{lifting-exists} there is a definable continuous map $f^G \colon G \to G$ with $f^G(1) = 1$ making the following diagram commute.
\begin{equation*}
\pushout{G}{G'}{G/\Gamma}{G'/{\Gamma'}}{f^G}{p}{p'}{f}
\end{equation*}
It remains to show that $f^G$ is a $\Gamma$-equivariant definable homotopy equivalence. 
The equation ${f^G}(c\;\cdot)={f^G}(c){f^G}(\cdot)$ for $c\in\Gamma$ holds because both maps coincide with the unique lifting of $f\circ p$ mapping $1\in G$ to $f^G (c)\in\Gamma'$. Now let $f' \colon G'/\Gamma' \to G/\Gamma$ be a homotopy inverse of $f$. Define ${f^G}'$ to be the unique lifting of
$f'\circ p'$ at $1 \in G$. Then as above ${{f^G}'}(c'y)={{f^G}'}(c'){{f^G}'}(y)$ for
any $c'\in\Gamma'$ and $y\in G'$.
Let $h:I\times G/\Gamma\to G/\Gamma$ be a definable homotopy between the identity $h_0$ on $G/\Gamma$
and $f'\circ f=h_1$, and let $h':I\times G'/{\Gamma'}\to
G'/{\Gamma'}$ be a definable homotopy
between the identity $h'_0$ on $G'/\Gamma'$ and $f\circ f'=h'_1$. We may assume
$h_t(1)=1$ for all $t$ (otherwise use $(t,x)\to
(h_t(1))^{-1}h_t(x)$ instead of $h$) and the same for $h'$.
Finally, define
$\w h:I\times G\to G$ as the unique lifting of
$h\circ({\rm Id}\times p) \colon I \times G \to G/\Gamma$ to $G$
and $\w h':I\times G'\to G'$ as
the unique lifting of $h' \circ({\rm Id}\times p) \colon I \times G' \to G'/\Gamma'$ to $G'$.
By uniqueness of liftings, $\w h$ is a definable homotopy between the
identity and $f^G\circ f^{G'}$. Similarly
$\w h'$ is a definable homotopy between the identity and $f^{G'}\circ f^G$.
Moreover $\w h$ and $\w h'$ are
constant on $I\times \Gamma$ and $I\times \Gamma'$
since $h$ and $h'$ are constant on $I\times\{e\}$.
The equations $\w h_t(cx)=c\w h_t(x)$ and $\w
h'_t(c'x')=c'\w h'_t(x')$, where $c\in \Gamma$ and $c'\in \Gamma'$, follow by uniqueness of liftings.
\ep

\section{Almost direct products} 

Given a group $G$ and two subgroups $A$ and $B$ of $G$, we recall that 
$G$ is the {\bf almost direct product} of $A$ and $B$ if $G=AB$ and the map 
$m \colon A\times B \to G$, $(x,y)\mapsto xy$, is a surjective group homomorphism with a finite kernel. This implies that $ab=ba$ for all $a\in A$ and $b\in B$, and that $\Gamma:= A \cap B$ is a finite central subgroup of $G$. In this situation we write $G = A\times_\Gamma B$. Note that the kernel of $m \colon A\times B \to A\times_\Gamma B$ is $\Gamma^\Delta := \{(c,c^{-1}) \mid c\in \Gamma\}$. 

Every definably compact definably connected group is an almost direct product of a definably connected abelian subgroup and a semisimple definable subgroup. More precisely we have: 

\begin{fact} \label{almost-direct} 
Let $G$ be a definably compact definably connected group. Let $Z^0(G)$ be the definable identity component of the center $Z(G)$ of $G$. By
\cite{HrPePi:08b} the commutator subgroup $[G,G]$ is definable and semisimple, and $G$ is
an almost direct product of $Z^0(G)$ and $[G,G]$. The corresponding statement holds in the category of compact connected Lie groups. 
\end{fact}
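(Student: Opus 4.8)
The plan is to derive both assertions from the structure theory of compact connected Lie groups, handling the Lie case first, since the genuinely substantial part of the definable statement is precisely what is imported from \cite{HrPePi:08b}. In the Lie case the decomposition into a central torus and a semisimple part is classical; in the definable case the two nontrivial inputs are the definability and semisimplicity of $[G,G]$ and the equality $G = Z^0(G)\cdot[G,G]$, both proved in \cite{HrPePi:08b}, and the rest of the argument is just checking that these inputs assemble into an almost direct product in the sense of the definition above.

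For a compact connected Lie group $K$ with Lie algebra $\mathfrak k$: averaging an arbitrary inner product over $K$ produces an $\Ad(K)$-invariant one, so $\mathfrak k$ is reductive and splits as $\mathfrak k = \mathfrak z \oplus \mathfrak s$ with $\mathfrak z = Z(\mathfrak k)$ and $\mathfrak s = [\mathfrak k,\mathfrak k]$ semisimple. It is classical that the commutator subgroup $[K,K]$ is a closed connected subgroup with Lie algebra $\mathfrak s$ (closedness uses Weyl's theorem: $\mathfrak s$ has negative definite Killing form, so $[K,K]$ is compact), that $Z^0(K)$ is the connected subgroup with Lie algebra $\mathfrak z$, and hence, by $\mathfrak k = \mathfrak z \oplus \mathfrak s$ together with connectedness, that $K = Z^0(K)\cdot[K,K]$. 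The intersection $Z^0(K)\cap[K,K]$ has trivial Lie algebra, so it is discrete and, by compactness, finite. Since $Z^0(K)$ is central, the two factors commute, so the multiplication map $Z^0(K)\times[K,K]\to K$ is a surjective group homomorphism with finite kernel, which is the required almost direct product.

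For $G$ definably compact definably connected, \cite{HrPePi:08b} yields that $[G,G]$ is a definable, definably connected, semisimple subgroup of $G$ and that $G = Z^0(G)\cdot[G,G]$. The remaining verifications are routine. The center $Z(G)$ is definable, being defined by a first-order formula, and a definable subgroup of a definable group has a definable identity component of finite index; hence $Z^0(G)$ is a definable, definably connected, abelian, normal subgroup of $G$. Since $Z^0(G)\le Z(G)$ the two factors commute and the multiplication map $m\colon Z^0(G)\times[G,G]\to G$ is a surjective group homomorphism; its kernel is the anti-diagonal copy of $Z^0(G)\cap[G,G]$, and this intersection lies in $Z([G,G])$, hence is finite since the center of a definably connected semisimple definable group is finite. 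This establishes the definable almost direct product, and the Lie statement was dealt with above. I expect the only real obstacle to lie in the two imported facts — definability and semisimplicity of $[G,G]$, and the generation $G = Z^0(G)\cdot[G,G]$ — which encode the hard o-minimal structure theory of \cite{HrPePi:08b}; once they are granted, the statement is Lie-theoretic bookkeeping.
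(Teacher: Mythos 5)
Your proposal is correct and follows the same route as the paper, which states this as a Fact with no proof beyond citing \cite{HrPePi:08b} for the definability and semisimplicity of $[G,G]$ and the decomposition $G = Z^0(G)\cdot[G,G]$, the Lie case being classical structure theory. Your added verifications (definability of $Z^0(G)$, centrality giving that multiplication is a homomorphism, and finiteness of $Z^0(G)\cap[G,G]$ via the finite center of the semisimple factor) are exactly the routine bookkeeping the paper leaves implicit.
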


\begin{lemma}\label{glue-lemma}
Consider two almost direct products of definable groups $G = A\times_\Gamma B$ and $G' = A'\times_{\Gamma'} B'$. Suppose that there are:
\begin{itemize}
\item an isomorphism $f^\Gamma \colon \Gamma \to \Gamma'$,
\item a $\Gamma$-equivariant definable homotopy equivalence $f^A:A\to A'$, 
\item a $\Gamma$-equivariant definable homotopy equivalence $f^B:B\to B'$, 
\end{itemize}
satisfying $f^A \rest \Gamma = f^B \rest \Gamma = f^\Gamma$.  Then there is a $\Gamma$-equivariant definable homotopy equivalence $f^G:G\to G'$ such that $f^G(ab) = f^A(a)f^B(b)$ for all $a\in A$ and $b\in B$. In particular $f^G\rest A = f^A$ and $f^G \rest B = f^B$. 
\end{lemma}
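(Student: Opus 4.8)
The plan is to build $f^G$ directly from the given data using the almost direct product structure, and then verify the four conditions in Definition \ref{equiva-riant-lence}. First I would define $f^G \colon G \to G'$ by $f^G(ab) = f^A(a) f^B(b)$. The first thing to check is that this is well defined: if $a_1 b_1 = a_2 b_2$ then $a_2^{-1} a_1 = b_2 b_1^{-1} \in A \cap B = \Gamma$, so writing $c = a_2^{-1} a_1 = b_2 b_1^{-1}$ we have $a_1 = a_2 c$ and $b_1 = c^{-1} b_2$; using the equivariance hypotheses $f^A(a_2 c) = f^A(a_2) f^\Gamma(c)$ and $f^B(c^{-1} b_2) = f^\Gamma(c)^{-1} f^B(b_2)$ (here I use that in the abelian-style equivariance $f^A(c x) = f^A(c) f^A(x)$ extends to the two-sided statement since $A$, $B$ are abelian, and similarly that $f^A$ agrees with $f^\Gamma$ on $\Gamma$), and the fact that $\Gamma'$ is central in $G'$, one gets $f^A(a_1) f^B(b_1) = f^A(a_2) f^\Gamma(c) f^\Gamma(c)^{-1} f^B(b_2) = f^A(a_2) f^B(b_2)$. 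Continuity and definability of $f^G$ follow because $m \colon A \times B \to G$ is a definable covering map (finite kernel $\Gamma^\Delta$), so $f^G$ is the unique continuous map with $f^G \circ m = m' \circ (f^A \times f^B)$, which is definable by Lemma \ref{deflift}-type reasoning (more directly: $f^G$ is locally given by $f^A \times f^B$ composed with local sections of $m$, all definable).

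Next I would construct the homotopy inverse. Let $f'^A$, $f'^B$ be the given $\Gamma'$-equivariant homotopy inverses of $f^A$, $f^B$; note $f'^A \rest {\Gamma'} = f'^B \rest {\Gamma'} = (f^\Gamma)^{-1}$. By the same well-definedness argument (now using centrality of $\Gamma$ in $G$) set $f'^G(a'b') = f'^A(a') f'^B(b')$; this is a definable continuous map $G' \to G$, and clearly $f'^G \rest {\Gamma'} = (f^\Gamma)^{-1} = (f^G \rest \Gamma)^{-1}$. The equations $f^G(1) = 1$, $f^G(cx) = f^G(c) f^G(x)$ for $c \in \Gamma$, and the analogous statements for $f'^G$, are immediate from the product formula together with centrality: writing $x = ab$ and $c \in \Gamma$, $cx = (ca) b$ (or split $c$ across both factors, it does not matter), so $f^G(cx) = f^A(ca) f^B(b) = f^\Gamma(c) f^A(a) f^B(b) = f^G(c) f^G(x)$.

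Then I would assemble the required homotopies. Given the definable homotopies $h \colon I \times A \to A$ relative to $\Gamma$ between $f'^A \circ f^A$ and $\mathrm{id}_A$ with $h_t(cx) = c h_t(x)$, and similarly $k \colon I \times B \to B$ for the $B$ side, define $H \colon I \times G \to G$ by $H_t(ab) = h_t(a) k_t(b)$. One must check this is well defined: if $ab = a_1 b_1$ with $a_1 = ac$, $b_1 = c^{-1} b$, $c \in \Gamma$, then $h_t(a_1) k_t(b_1) = h_t(ac) k_t(c^{-1} b) = c h_t(a) c^{-1} k_t(b) = h_t(a) k_t(b)$ using the equivariance of $h, k$ and centrality of $\Gamma$ in $G$. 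It is definable and continuous for the same covering-map reason as $f^G$. Clearly $H_0 = \mathrm{id}_G$, $H_1 = f'^G \circ f^G$ (using the product formulas for $f^G$, $f'^G$ and that $f'^A \circ f^A$ commutes past $\Gamma$ correctly), $H$ is constant on $I \times \Gamma$ (since $h$, $k$ are), and $H_t(cx) = c H_t(x)$ for $c \in \Gamma$ by the same computation. Symmetrically build $H' \colon I \times G' \to G'$ from the $\Gamma'$-equivariant homotopies on the primed side. This gives all four bullet points of Definition \ref{equiva-riant-lence}, and the identities $f^G \rest A = f^A$, $f^G \rest B = f^B$ follow by taking $b = 1$ or $a = 1$ in the product formula.

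The main obstacle, and the one point deserving genuine care, is the well-definedness checks for $f^G$, $f'^G$, $H$, $H'$: in each case one is passing from the direct product $A \times B$ to the almost direct product $G = A \times_\Gamma B$, and the argument that the ambiguity $(a,b) \mapsto (ac, c^{-1}b)$ is absorbed relies essentially on three facts working together — the given maps restrict to $f^\Gamma$ (resp.\ $(f^\Gamma)^{-1}$, resp.\ are constant) on $\Gamma$, the maps are equivariant in the one-sided sense which (because the groups here need not be abelian in Lemma \ref{glue-lemma}, though $Z^0$ is) must be checked to give what is needed, and $\Gamma$ is central in $G$ and $\Gamma'$ central in $G'$, so the inserted factors $f^\Gamma(c)$ and $f^\Gamma(c)^{-1}$ can be moved next to each other and cancelled. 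Once these bookkeeping verifications are done the rest is routine diagram-chasing with uniqueness of liftings along the covering maps $m$ and $m'$.
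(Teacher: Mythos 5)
Your proposal is correct and follows essentially the same route as the paper: define $f^G(ab)=f^A(a)f^B(b)$, take $f'^G$ symmetrically, glue the homotopies by $h^G_t(ab)=h^A_t(a)h^B_t(b)$, and verify well-definedness and continuity via the covering map $m\colon A\times B\to G$; you merely spell out the well-definedness computations that the paper leaves implicit. One small correction: the justification is not that $A$ and $B$ are abelian (they need not be, e.g.\ $B=[G,G]$ in the application), but, as you say at the end, that $\Gamma$ is central in $G$ and $\Gamma'$ in $G'$ by the definition of almost direct product, which is exactly what makes the inserted factors $f^\Gamma(c)$, $f^\Gamma(c)^{-1}$ cancel.
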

\begin{proof}
By definition of almost direct product there is a (unique) well defined map $f^G\colon G \to G'$ satisfying $f^G(ab)=f^A(a)f^B(b)$ for $a\in A$ and $b\in B$.
Moreover $f^G$ is continuous since the multiplication $m\colon A\times B \to G$ is a definable covering map (hence locally $f^G = (f^A \otimes f^B) \circ m^{-1}$). Let $f'^A$ and $f'^B$ be homotopy inverses for $f^A$ and $f^B$ satisfying
the conditions of definition \ref{equiva-riant-lence} and let $f'^G:G'\to G$ be defined symmetrically. 
We claim that $f^G$ is a definable homotopy equivalence with homotopy
inverse $f'^G$. In fact, let $h^A:I\times A\to A$ be a definable homotopy between
$f'^A\circ f^A$ and the identity satisfying the conditions of Definition
\ref{equiva-riant-lence},
and let $h^B:I\times B\to B$ be the same for $f'^B\circ f^B$. Define
\begin{equation} \label{bla} h^G_t(ab)=h^A_t(a)h^B_t(b) \end{equation}
for $a\in A$ and $b\in B$. The fact that $h^G_t$ is well defined follows by the conditions in Definition \ref{equiva-riant-lence} and the definition of almost direct product.  
A definable homotopy between $f^G\circ f'^G$
and the identity can be defined symmetrically. The lemma is thus proved.
\end{proof}

\begin{fact} (\cite{Co:09}) \label{conversano} Let $A,B$ be type-definable subgroups of a definable group $G$, $A$ normal in $G$. Then $AB=\{ab \mid a \in A, b\in B\}$ is a type-definable subgroup of $G$ and $(AB)^{00} = A^{00}B^{00}$. \end{fact}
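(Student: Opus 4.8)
The plan is to treat the two assertions in turn, using throughout the minimality characterization of $H^{00}$ (the smallest type-definable subgroup of bounded index) and the fact that, since $A\lhd G$ and $A^{00}$ is characteristic in $A$, also $A^{00}\lhd G$.

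First I would check that $AB$ is a subgroup: since $A\lhd G$ we have $bab^{-1}\in A$ for $a\in A$, $b\in B$, so $(a_1b_1)(a_2b_2)=a_1(b_1a_2b_1^{-1})b_1b_2\in AB$ and $(ab)^{-1}=(b^{-1}a^{-1}b)b^{-1}\in AB$. To see that $AB$ is type-definable, write $A=\bigwedge_{i\in I}A_i$ and $B=\bigwedge_{j\in J}B_j$ with $\{A_i\}$, $\{B_j\}$ downward directed and $I,J$ small; then each $A_iB_j=\{x:\exists a\,(a\in A_i\wedge a^{-1}x\in B_j)\}$ is definable, and I claim $AB=\bigcap_{i,j}A_iB_j$, which is type-definable over the small index set $I\times J$. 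The inclusion $\subseteq$ is immediate; for $\supseteq$, fix $x$ in the intersection and consider the definable sets $S_{i,j}=\{a:a\in A_i\wedge a^{-1}x\in B_j\}$, each nonempty because $x\in A_iB_j$. Directedness gives the finite intersection property for $\{S_{i,j}\}$, so by saturation there is $a\in\bigcap_iA_i=A$ with $a^{-1}x\in\bigcap_jB_j=B$, i.e.\ $x\in AB$.

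For the equality $(AB)^{00}=A^{00}B^{00}$: applying the first part with $(A^{00},B^{00})$ in place of $(A,B)$ shows $A^{00}B^{00}$ is a type-definable subgroup of $AB$. For $(AB)^{00}\subseteq A^{00}B^{00}$ it is enough that $A^{00}B^{00}$ has bounded index in $AB$, and this follows because the surjection $A\times B\to AB/A^{00}B^{00}$, $(a,b)\mapsto abA^{00}B^{00}$, is constant on cosets of $A^{00}\times B^{00}$: indeed $(ab)^{-1}(aa_0)(bb_0)=(b^{-1}a_0b)b_0\in A^{00}B^{00}$ using $A^{00}\lhd G$, so the index is at most $[A:A^{00}]\cdot[B:B^{00}]$. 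Conversely, $A\cap(AB)^{00}$ is a type-definable subgroup of $A$ of bounded index (since $A/(A\cap(AB)^{00})$ embeds into the bounded group $AB/(AB)^{00}$), hence contains $A^{00}$ by minimality; likewise $B\cap(AB)^{00}\supseteq B^{00}$; since $(AB)^{00}$ is a subgroup it then contains $A^{00}B^{00}$.

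The one step needing genuine care is the saturation argument identifying $AB$ with $\bigcap_{i,j}A_iB_j$: a priori the image of a type-definable set under multiplication is only $\bigvee$-definable, and it is the directedness of the presentations of $A$ and $B$, together with the normality of $A$, that pins it down as type-definable; after that the argument is just bookkeeping with the minimality property of $H^{00}$.
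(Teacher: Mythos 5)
The paper itself gives no proof of this statement: it is imported as a Fact from Conversano's thesis \cite{Co:09}, so there is no internal argument to compare yours with. Judged on its own, your proof is correct and is the standard one. Two small remarks. First, the saturation argument identifying $AB$ with $\bigcap_{i,j}A_iB_j$ (after closing the presentations of $A$ and $B$ under finite intersections to get directedness) is exactly right, but contrary to your closing comment it does not use normality at all: the image of a type-definable set under a definable map --- here the restriction of multiplication to $A\times B$ --- is always type-definable by this compactness argument; normality of $A$ is needed only for $AB$ to be a subgroup and, later, for $A^{00}\lhd G$. Second, that normality of $A^{00}$ in $G$ deserves one explicit line: for fixed $g\in G$, conjugation by $g$ is a definable automorphism of $A$, so $gA^{00}g^{-1}$ is again a type-definable subgroup of $A$ of bounded index, and minimality applied in both directions gives $gA^{00}g^{-1}=A^{00}$. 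Granting this, your coset computation $(ab)^{-1}(aa_0)(bb_0)=(b^{-1}a_0b)b_0\in A^{00}B^{00}$ correctly bounds $[AB:A^{00}B^{00}]$ by $[A:A^{00}]\cdot[B:B^{00}]$, and the reverse inclusion via $A\cap(AB)^{00}\supseteq A^{00}$ and $B\cap(AB)^{00}\supseteq B^{00}$ is fine. Note finally that this second half of your argument works verbatim with an arbitrary type-definable bounded-index subgroup $H\leq AB$ in place of $(AB)^{00}$, showing $A^{00}B^{00}\subseteq H$; phrased that way it establishes the existence of $(AB)^{00}$ (as $A^{00}B^{00}$ itself) rather than presupposing it, which is the cleaner formulation since $A$ and $B$ are only type-definable.
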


\bl \label{direct} Let $G = A\times_\Gamma B$ be an almost direct product of definable groups. Let $\tau\colon G \to \lie G$ be the natural map. Then $\lie G = \tau(A) \times_{\tau \Gamma} \tau(B)$. \el 
\bp 
Consider the homomorphism $m: \tau(A)\times \tau(B)\to \lie G$ sending $(aG^{00}, bG^{00})$ to $abG^{00}$. Since $G^{00} = A^{00}B^{00}$ (Fact \ref{conversano}), if $abG^{00}$ is the identity of $\lie G$ we have $a a' = b^{-1} b'$ for some $a'\in A^{00}$ and $b' \in B^{00}$. But $A\cap B = \Gamma$, so there is $c\in \Gamma$ such that $a a' = b^{-1} b'= c$. 
It follows that $aG^{00} = cG^{00}$ and $bG^{00} = c^{-1}G^{00}$. We have thus proved that the kernel of $m$ is the finite subgroup $\tau(\Gamma)^\Delta := \{(cG^{00}, c^{-1}G^{00})\mid c \in \Gamma\}$. 
\ep 

\br 
Let $G$ be a definably compact group and let $A$ be a definable subgroup of $G$. 
Let $\tau \colon G \to \lie G$ be the natural map. By \cite{HrPePi:08} we have
$A\cap G^{00} = A^{00}$ (see \cite{Be:07} for the non-abelian case). Therefore \[\tau(A) = AG^{00}/G^{00} \cong A/A^{00}\] via the natural homomorphism sending $aA^{00}\in \lie A$ into $aG^{00} \in AG^{00}/G^{00}$. 
\er

\bl \label{restriction-lemma} Let $G$ be a definably compact definably connected group. Write $G = Z^0(G)\times_\Gamma [G,G]$. Let $\tau \colon G \to \lie G$ be the natural map. Then $\tau(Z^0(G)) = Z^0(\lie G)$ and $\tau ([G,G]) = [\lie G, \lie G]$. So 
\begin{eqnarray*}
\lie G  & = & \tau (Z^0(G)) \; \times_{\tau \Gamma} \; \tau([G,G]) \\
       & = & Z^0(\lie G) \; \times_{\tau \Gamma} \; [\lie G, \lie G] 
\end{eqnarray*}
\el
\bp 
By Fact \ref{almost-direct} $\dim(G) = \dim(Z(G))+\dim([G,G])$ and similarly for $\lie G$. 
By \cite{HrPePi:08} the dimension of $A$ as a definable group equals the dimension of $\lie A$ as a Lie group. So $\tau$ preserves dimensions. The equality $\tau ([G,G]) = [\lie G, \lie G]$ is clear. The inclusion $\tau(Z^0(G)) \subset Z^0(\lie G)$ is also clear (using the fact the image under $\tau$ of a definably connected set is connected). The result follows by counting dimensions. 
\ep 

\bt \label{full} Let $G$ and $G'$ be definably compact definably connected groups. 
Suppose that there is a Lie isomorphism $\psi \colon \lie {G} \to \lie
{G'}$. Then there is a definable homotopy equivalence $f:G\to G'$. Moreover given a finite central subgroup $\Gamma$ of $G$ we can choose $f$ to be $\Gamma$-equivariant, or even $\Gamma'$-equivariant where $\Gamma' = \Gamma [G,G]$ (so in particular $f\rest {[G,G]}$ is an isomorphism onto $[G',G']$). 
If the o-minimal structure is sufficiently saturated we can ensure that $\tau^{G'} \circ f \rest {\Gamma'} = \psi\circ\tau^G \rest {\Gamma'}$ where $\tau_G \colon G \to \lie G$ and $\tau^{G'}\colon G' \to \lie {G'}$ are the projections. 
\et  
\bp We can write $G = Z^0(G) \times_{\Gamma_0} [G,G]$ and we can assume that $\Gamma \supset \Gamma_0$. 
Let $\Gamma_1 = \Gamma [G,G] \cap Z^0(G)$ and note that $\Gamma_1[G,G] = \Gamma[G,G]$. By Theorem \ref{abelian-case} (and lemma \ref{restriction-lemma}) there is 
a definable $\Gamma_1$-equivariant homotopy equivalence $f^Z \colon Z^0(G) \to Z^0(G')$
such that $\tau\circ f^Z\rest {\Gamma}=\psi\circ\tau\rest{\Gamma}$. By Theorem \ref{semisimple-case} (and Lemma \ref{restriction-lemma}) there is a definable isomorphism
$f^{[G,G]} \colon [G,G]\to[G',G']$. In particular both $f^Z$ and $f^{[G,G]}$ are $\Gamma_0$-equivariant definable homotopy equivalences. So by lemma \ref{glue-lemma} there is a $\Gamma_0$-equivariant definable homotopy equivalence $f^G \colon G\to G'$ such that $f^G(ab) = f^Z(a)f^{[G,G]}(b)$ for all $a\in Z^0(G)$ and $b\in [G,G]$. 
By construction, using Equation (\ref{bla}) in Lemma \ref{glue-lemma}, $f^G$ is in fact a $\Gamma_1[G,G]$-equivariant definable homotopy equivalence. If the o-minimal structure is sufficiently saturated (and we choose $f^{[G,G]}$ as in Theorem \ref{semisimple-case}) we obtain $\tau^{G'} \circ f \rest {\Gamma'} = \psi\circ\tau^G \rest {\Gamma'}$. 
\ep 

Another application of Fact \ref{almost-direct} and Lemma \ref{glue-lemma} is the following. 

\bc \label{sa} For each definably compact definably connected group $G$, there is a semialgebraic group over $\R^{\rm{alg}}$, which is definably homotopy equivalent to $G$ and has the same associated Lie group. \ec 
\bp We can write $G = Z^0(G)\times_\Gamma [G,G]$. Let $d = \dim Z^0(G) = \dim Z(G)$. Let $T = SO(2,M)$. Then $T^d$ is semialgebraic over $\R^{\rm{alg}}$ and has the same associated Lie group as $Z^0(G)$. So there is a definable homotopy equivalence $f\colon Z^0(G)\to T^d$ which is $\Gamma$-equivariant. Now let $S$ be a semialgebraic group over $\R^{\rm{alg}}$ definably isomorphic to $[G,G]$. So in particular there is a $\Gamma$-equivariant definable homotopy equivalence $g\colon [G,G]\to S$. We can assume that $f$ and $g$ are the identity on $\Gamma$. By Lemma \ref{glue-lemma} $G$ is definably homotopy equivalent to $G':= T^d \times_\Gamma S$. Note that $G'$ is semialgebraic over $\R^{\rm{alg}}$ and has the same associated Lie group as $G$ by Lemma \ref{direct}. 
\ep

The following corollary was proved in the abelian case in \cite{BeMaOt:09}. Granted the above Corollary the same proof extends to the general case. 

\bc For each definably compact definably connected group $G$, $$H^{\rm{def}}_*(G;\Z) \cong H_*(\lie G; \Z)$$ where $H^{\rm{def}}$ is the o-minimal homology functor. \ec

\end{document}